\newtheorem{theorem}{Theorem}[section]
\newtheorem{lemma}[theorem]{Lemma}
\newtheorem{corollary}[theorem]{Corollary}
\newtheorem{observation}[theorem]{Observation}
\theoremstyle{definition}
\newtheorem{definition}[theorem]{Definition}
\newtheorem*{boldremark}{Remark}
\newcommand{\auxgraph}{\Gamma}
\newcommand{\co}{\colon\thinspace}
\newcommand{\facets}{\Phi}
\newcommand{\jm}{\jmof{P}}
\newcommand{\jmof}[1]{\mathcal{J}_{#1}}
\newcommand{\R}{\mathbb{R}}
\newcommand{\rank}{\mathop{\mathrm{rank}}}
\newcommand{\Z}{\mathbb{Z}}
\begin{document}

%
%

\title[Complementary vertices and adjacency testing in polytopes]%
    {Complementary vertices and \\ adjacency testing in polytopes}
\author{Benjamin A.\ Burton}
\address{School of Mathematics and Physics \\
    The University of Queensland \\
    Brisbane QLD 4072 \\
    Australia}
\email{bab@maths.uq.edu.au}
\thanks{The author is supported by the Australian Research Council
    under the Discovery Projects funding scheme (project DP1094516).}
\subjclass[2000]{%
    Primary
    52B05, 
    52B55} 
\keywords{polytopes, complementary vertices, disjoint facets,
    adjacent vertices, vertex enumeration, double description method}
\begin{abstract}
    Our main theoretical result is that,
    if a simple polytope has a pair of complementary vertices
    (i.e., two vertices with no facets in common),
    then it has at least two such pairs, which can be chosen to be disjoint.
    Using this result,
    we improve adjacency testing for vertices in both
    simple and non-simple polytopes:
    given a polytope in the standard form
    $\{\mathbf{x}\in\mathbb{R}^n\,|\,A\mathbf{x}=\mathbf{b}\ \mbox{and}\
    \mathbf{x}\geq 0\}$ and a list of its $V$ vertices,
    we describe an $O(n)$ test to identify whether any two given
    vertices are adjacent.  For simple polytopes this test is perfect;
    for non-simple polytopes it may be indeterminate, and instead acts
    as a filter to identify non-adjacent pairs.
    Our test requires an \mbox{$O(n^2V+nV^2)$} precomputation,
    which is acceptable in settings such as all-pairs adjacency testing.
    These results improve upon the more general $O(nV)$ combinatorial
    and $O(n^3)$ algebraic adjacency tests from the literature.

    \bigskip

    \noindent
    \textsc{Conference vs journal versions.}
    This is the journal version of a paper that appeared in
    \textit{Computing and Combinatorics: 18th Annual International
    Conference, COCOON 2012},
    Lecture Notes in Comput.\ Sci., vol.~7434, Springer, 2012, pp.~507--518.
    This journal version restructures and extends Section~\ref{s-comp}.
    It strengthens the main result (Theorem~\ref{t-comp})
    to show that the two pairs of complementary vertices can be made
    disjoint, provides richer supporting information on facet sets, and
    incorporates additional results for the case of a $d$-polytope
    with $2d$ facets.
\end{abstract}

\maketitle

%
%

\section{Introduction}

Two vertices of a polytope are \emph{complementary} if they do not
belong to a common facet.  Complementary vertices play an important role
in the theory of polytopes; for instance, they provide the setting for
the $d$-step conjecture \cite{kim10-hirsch,klee67-dstep}
(now recently disproved \cite{santos12-hirsch}),
and in the dual setting of disjoint facets
they play a role in the classification of compact hyperbolic Coxeter polytopes
\cite{felikson09-coxeter}.
In game theory, Nash equilibria of bimatrix games are described by an
analogous concept of complementary vertices in \emph{pairs} of polytopes
\cite{jansen81-nash,winkels79-bimatrix}.

Our first main contribution, presented in Section~\ref{s-comp},
relates to the minimal
number of complementary vertex pairs.  Many polytopes have
no pairs of complementary vertices at all (for instance,
any neighbourly polytope).  However, we prove here that
if a simple polytope $P$ of dimension $d>1$
has at least one pair of complementary
vertices, then it must have at least \emph{two} such pairs.
Moreover, these two pairs can be chosen to have no vertex in common.

The proof involves the construction of paths
through a graph whose nodes represent pairs of complementary or
``almost complementary'' vertices of $P$.  In this sense it is
reminiscent of the Lemke-Howson algorithm for constructing
Nash equilibria in bimatrix games \cite{lemke64-games},
although our proof operates in a less well-controlled setting.
We discuss this relationship further in Section~\ref{s-disc}.

Our second main contribution, presented in Section~\ref{s-adj},
is algorithmic: we use our first theorem to build a fast adjacency test.
Specifically, given a polytope in the standard form
$P = \{\mathbf{x}\in\mathbb{R}^n\,|\,A\mathbf{x}=\mathbf{b}\ \mbox{and}\
\mathbf{x}\geq 0\}$ with $V$ vertices,
we begin with an $O(n^2V+nV^2)$ time precomputation step,
after which we can test any two vertices for adjacency in $O(n)$ time.
If $P$ is simple (which the algorithm can also identify)
then this test always gives a precise response;
otherwise it may be indeterminate but it
can still assist in identifying non-adjacent pairs.

The key idea is, for each pair of vertices $u,v \in P$, to compute the
join $u \vee v$; that is, the minimal face containing both $u$ and $v$.
If $P$ is simple then our theorem on complementary vertices
shows that $u \vee v = u' \vee v'$ for a second pair of vertices $u',v'$.
Our algorithm then identifies such ``duplicate'' joins.

Although the precomputation is significant, if we are testing all
$\binom{V}{2}$ pairs of vertices for adjacency then it does not increase
the overall time complexity.  Our $O(n)$ test then becomes
extremely fast, outperforming the standard $O(nV)$ combinatorial
and $O(n^3)$ algebraic tests from the literature \cite{fukuda96-doubledesc}.
Even in the non-simple setting, our test can be used as a fast
pre-filter to identify non-adjacent pairs of vertices,
before running the more expensive standard tests on those pairs that remain.

In Section~\ref{s-disc} we discuss these performance issues further,
as well as the application of these ideas to the key problem of
polytope vertex enumeration.

All time complexities are measured using the arithmetic model of computation,
where we treat each arithmetical operation as constant-time.

We briefly remind the reader of the necessary terminology.
Following Ziegler \cite{ziegler95}, we insist that all polytopes be bounded.
A \emph{facet} of a $d$-dimensional polytope $P$ is a
$(d-1)$-dimensional face, and two vertices of $P$ are \emph{adjacent}
if they are joined by an edge.
$P$ is \emph{simple} if every vertex belongs to precisely
$d$ facets (i.e., every vertex figure is a $(d-1)$-simplex),
and $P$ is \emph{simplicial} if every facet contains precisely $d$ vertices
(i.e., every facet is a $(d-1)$-simplex).
As before, two vertices of $P$ are \emph{complementary} if they do not
belong to a common facet; similarly, two facets
of $P$ are \emph{disjoint} if they do not contain a common vertex.

%
%

\section{Complementary Vertices} \label{s-comp}

In this section we prove the main theoretical result of this paper:

\begin{theorem} \label{t-comp}
    Let $P$ be a simple polytope of dimension $d > 1$.
    If $P$ has a pair of complementary vertices,
    then $P$ has at least two disjoint pairs of complementary vertices.
\end{theorem}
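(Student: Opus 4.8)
The plan is to argue via a Lemke--Howson--style walk through an auxiliary graph built out of ``nearly complementary'' vertex pairs. Call two distinct vertices \emph{almost complementary} if they lie in exactly one common facet. Because $P$ is simple, each vertex $w$ lies in exactly $d$ facets and has exactly $d$ incident edges, the edge ``opposite'' a facet $F\ni w$ being the intersection of the other $d-1$ facets at $w$; following that edge to its far endpoint $w'$ removes $F$ from the facet list of $w$, adds exactly one new facet, and leaves the remaining $d-1$ facets untouched. This gives a completely explicit account of how the set of common facets of a pair $\{x,y\}$ changes under a single ``pivot'' of $x$ or $y$, and it drives the whole proof.

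From this I would form the graph $\auxgraph$ whose nodes are the complementary and almost-complementary pairs of $P$, with each almost-complementary node also recording its unique common facet $G$, and whose edges are the \emph{admissible} pivots in the sense of Lemke and Howson: from an almost-complementary node with distinguished facet $G$ one may pivot only the endpoint lying in $G$, and only along its edge opposite $G$. The pivot calculus above should show that such a move produces either another almost-complementary pair (with a new distinguished facet) or a complementary pair, so that every almost-complementary node has exactly two admissible continuations. Hence the part of $\auxgraph$ on the almost-complementary nodes is a disjoint union of paths and cycles whose loose ends abut complementary pairs; starting from a loose end abutting the given complementary pair $\{u_0,v_0\}$ and following the path to its other loose end, and accounting for these loose ends (the analogue of ``artificial solutions come in pairs''), produces a second complementary pair $\{u_1,v_1\}$ --- the weaker form of the theorem.

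To promote ``two pairs'' to ``two \emph{disjoint} pairs'' I would use the elementary observation that a family of $2$-element sets with no two members disjoint is either a star (all members through one vertex) or a triangle on three vertices, and kill both cases by pivoting. In the triangle case $a,b,c$ are pairwise complementary; a suitable pivot of $a$ yields a vertex still complementary to $b$ (or to $c$) and distinct from all of $a,b,c$, producing a complementary pair outside the triangle. In the star case every vertex complementary to the apex $w$ is complementary to nothing else, and no two non-apex vertices are complementary; but then, for each of the $d$ edges at $w$, the single new facet obtained by pivoting $w$ along it must lie in the facet list of \emph{every} vertex complementary to $w$, since otherwise that pivot exhibits a complementary pair avoiding $w$. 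In a simple polytope this forces all of those vertices to coincide, so $w$ has exactly one complementary partner and $P$ has only one complementary pair at all --- contradicting the weaker form just proved.

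I expect the main obstacle to be exactly what the paper calls the ``less well controlled'' nature of the setting: verifying that the admissible pivots are precisely the ones making every almost-complementary node have degree two (ruling out pivots that turn one common facet into two, or that run one vertex of the pair into the other), handling coincidences among the new facets supplied by distinct edges at a vertex --- which is where the hypothesis $d>1$ enters, and also where the star/triangle elimination needs care --- and dealing with short or degenerate walks. The clean special case of a $d$-polytope with $2d$ facets, in which every facet already meets one of any complementary pair, is the natural warm-up for all of this bookkeeping.
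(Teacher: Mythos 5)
Your overall architecture---an auxiliary graph on complementary and almost-complementary pairs, degree two at the almost-complementary nodes, and a Lemke--Howson-style walk---is exactly the paper's, and your ``pivot calculus'' correctly reproduces Lemma~\ref{l-arcs}(ii). The genuine gap is in the step ``following the path to its other loose end produces a \emph{second} complementary pair.'' In the Lemke--Howson setting this works because, once a label is dropped, every fully-labelled node has degree one, so it is the end of exactly one path and distinct paths have distinct fully-labelled endpoints. Here a complementary node has degree $2d$, so nothing you have said prevents it from absorbing \emph{both} ends of a single path: a priori the walk you start at $\{u_0,v_0\}$ could simply return to $\{u_0,v_0\}$, and counting loose ends gives no contradiction, since the number of path-ends incident to complementary nodes is $2d$ times the number of such nodes and is even regardless of whether that number is one or two. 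Ruling this out needs an invariant carried along the walk, and this is precisely what the paper supplies: each arc from $\{u,x\}$ to $\{u,y\}$ is labelled by the set $\facets(\alpha)$ of the $2d-1$ facets containing $u$ or the edge $xy$; this set is unchanged across an almost-complementary node, while the $2d$ arcs at a complementary node carry $2d$ \emph{pairwise distinct} such sets (each omits a different one of the $2d$ facets meeting the pair). A closed walk through $\{u_0,v_0\}$ would force its first and last arcs to have equal and unequal facet sets simultaneously. Your pivot calculus contains all the raw material for this invariant, but without formulating and using it the first half of the theorem does not follow.

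For the upgrade to \emph{disjoint} pairs you take a genuinely different route (star/triangle analysis of an intersecting family of $2$-sets) from the paper, which instead exhibits a walk containing at least one almost-complementary node and shows, via the conserved facet set at its two ends, that the endpoints cannot share a vertex. Your triangle case does close cleanly: since $b$ and $c$ are complementary, no facet contains both, so the facet gained by any pivot of $a$ misses one of them and the resulting pair is disjoint from $\{a,c\}$ or $\{a,b\}$. But your star case rests on the $d$ new facets $F_1,\dots,F_d$ gained by pivoting the apex $w$ being distinct, so that every partner of $w$ touches all $d$ of them and is thereby pinned down to a single vertex. These facets need not be distinct in a simple polytope---they coincide exactly when the corresponding $2$-face at $w$ is a triangle, and in a simplex all $d$ of them coincide---and you flag this yourself without resolving it. As written, the star case only shows that every partner of $w$ lies on each of the (possibly fewer than $d$) distinct facets among the $F_j$, which does not force uniqueness, so this half of the argument is also incomplete.
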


By ``disjoint'', we mean that
these pairs cannot be of the form $\{u,v\}$ and $\{u,w\}$;
instead they must involve four distinct vertices of $P$.

The proof of Theorem~\ref{t-comp} involves an
auxiliary graph, which we now describe.
To avoid confusion with vertices and edges
of polytopes, we describe graphs in terms of \emph{nodes} and
\emph{arcs}.  We do not allow graphs to have loops (i.e., arcs that
join a node to itself), or multiple arcs (i.e., two or more arcs
that join the same pair of nodes).

\begin{definition}[Auxiliary graph] \label{d-auxgraph}
    Let $P$ be a simple polytope of dimension $d > 1$.
    We construct the \emph{auxiliary graph} $\auxgraph(P)$ as follows:
    \begin{itemize}
        \item The \emph{nodes} of $\auxgraph(P)$ are unordered pairs of
        vertices $\{u,v\}$ of $P$ where $u,v$ have at most one facet in
        common.
        We say the node $\{u,v\}$ is of \emph{type~A} if
        the vertices $u,v \in P$ are complementary (they have no facets
        in common), or of \emph{type~B} otherwise (they have
        precisely one facet in common).

        \smallskip

        \item The \emph{arcs} of $\auxgraph(P)$ join nodes of the form
        $\{u,x\}$ and $\{u,y\}$, where $x$ and $y$ are adjacent vertices
        of $P$, and where
        no single facet of $P$ contains all three vertices $u,x,y$.
        For each arc $\alpha$, we define the \emph{facet set}
        $\facets(\alpha)$ to be the set of all facets of $P$
        that contain either the vertex $u$ or the edge $xy$.
    \end{itemize}
\end{definition}

\begin{figure}[htb]
\centering
\includegraphics[scale=0.7]{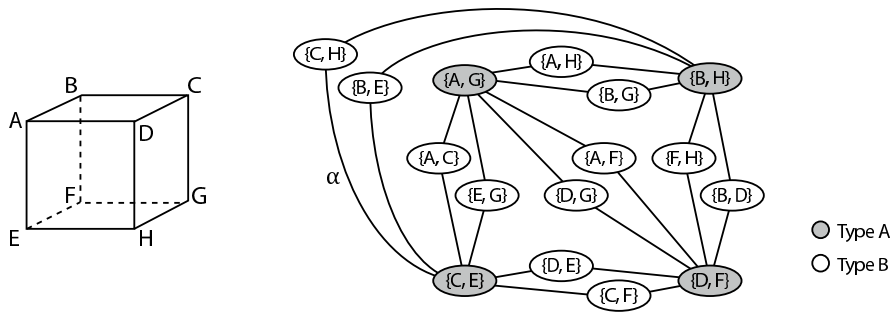}
\caption{A polytope $P$ and the corresponding auxiliary graph $\auxgraph(P)$}
\label{fig-cube}
\end{figure}

Figure~\ref{fig-cube} illustrates this graph for the case where $P$ is a cube.
Informally, each arc of $\auxgraph(P)$ modifies a node by ``moving''
one of its two vertices along an edge of $P$, so that if the
two vertices lie on a common facet $F$ then this movement is away from $F$.

To illustrate facet sets, consider the arc $\alpha$ joining $\{C,H\}$
with $\{C,E\}$.  The corresponding facet set $\facets(\alpha)$
has size $|\facets(\alpha)|=5$, and contains every facet of the
cube except for the left facet $\mathit{ABFE}$ (since every other facet
contains either the vertex $C$ or the edge $EH$).

More generally, it is clear that each facet set $\facets(\alpha)$ has
size $|\facets(\alpha)|=2d-1$, since by definition of $\auxgraph(P)$,
if an arc $\alpha$ joins nodes $\{u,x\}$ and $\{u,y\}$ then
the $d$ facets that contain
$u$ and the $d-1$ facets that contain the edge $xy$ must all be distinct.

We can formally characterise the arcs of $\auxgraph(P)$ as follows:

\begin{lemma} \label{l-arcs}
    Let $\nu=\{u,v\}$ be a node of $\auxgraph(P)$ as outlined above.
    \begin{enumerate}[(i)]
        \item If $\nu$ is of type~A, there are precisely $2d$ arcs
        $\alpha_1,\ldots,\alpha_{2d}$ meeting $\nu$.
        These include $d$ arcs
        that connect
        $\nu$ with $\{u,x\}$ for every vertex $x$ adjacent to $v$ in $P$,
        and $d$ arcs
        that connect $\nu$ with $\{y,v\}$ for
        every vertex $y$ adjacent to $u$ in $P$.
        The $2d$ facet sets
        $\facets(\alpha_1),\ldots,\facets(\alpha_{2d})$ are all distinct,
        and each consists of all but one of the $2d$ facets that
        touches either $u$ or $v$.

        \smallskip

        \item If $\nu$ is of type~B, there are precisely two arcs
        $\alpha,\alpha'$
        meeting $\nu$.  Let $F$ be the unique facet of $P$ containing both
        $u$ and $v$.  Then these two arcs join $\nu$ with
        $\{u,x\}$ and $\{y,v\}$, where
        $x$ is the unique vertex of $P$ adjacent to $v$
        for which $x \notin F$, and $y$ is the unique vertex of $P$ adjacent
        to $u$ for which $y \notin F$.
        The two facet sets $\facets(\alpha),\facets(\alpha')$ are identical,
        and each consists of all $2d-1$ facets that touch either $u$ or $v$.
    \end{enumerate}
\end{lemma}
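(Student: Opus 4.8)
The plan is to exploit the rigid local structure of a simple polytope. At any vertex $w$ of a simple $d$-polytope there are exactly $d$ facets and exactly $d$ edges through $w$, and they are paired: each edge $wz$ is \emph{opposite} the unique facet through $w$ that it misses, so the $d-1$ facets through $wz$ are precisely the facets through $w$ other than that opposite one. I will also repeatedly use the trivial fact that if $z,z'$ are adjacent vertices of $P$ and $H$ is a facet with $z\in H$, then either $z'\notin H$, or else $z'\in H$ and then the whole edge $zz'$ lies in the convex set $H$.

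First I would pin down which ``moves'' leave a node $\nu=\{u,v\}$. Suppose we move $v$ along an edge $vx$ to the candidate $\{u,x\}$. Any facet containing both $u$ and $v$ is a common facet of the pair $\{u,v\}$, of which there is at most one; hence the arc-condition ``no facet contains all of $u,v,x$'' holds automatically in type~A (no common facet), while in type~B, with $F$ the unique common facet, it holds precisely when $x\notin F$. I would then observe that whenever this condition holds, $\{u,x\}$ is genuinely a node: by the pairing, the facets through $x$ are the $d-1$ facets through $vx$ (each containing $v$) together with one further facet $H$ with $v\notin H$; so a facet shared by $u$ and $x$ is either one of those $d-1$ facets --- forcing it to be shared by $u$ and $v$, hence impossible in type~A and equal to $F$ in type~B, which is excluded since $x\notin F$ --- or else it is $H$; thus $u$ and $x$ share at most the single facet $H$. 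The symmetric statements govern moves of $u$.

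Next I would count. In type~A the vertices $u$ and $v$ are non-adjacent (an edge between them would lie in $d-1\ge 1$ common facets), so each of the $d$ neighbours $x$ of $v$ yields an arc to the node $\{u,x\}$ and each of the $d$ neighbours $y$ of $u$ yields an arc to $\{y,v\}$; these $2d$ target nodes are pairwise distinct, and by the definition of an arc every arc at $\nu$ arises by moving one of the two endpoints, so $\nu$ meets exactly $2d$ arcs. In type~B, among the $d$ neighbours of $v$ exactly one --- the endpoint $x$ of the edge at $v$ opposite $F$ --- lies outside $F$, every other edge at $v$ being contained in $F$; symmetrically $u$ has a unique admissible neighbour $y$; and here $x\ne u$ because $x$ lies on every facet through $vx$, none of which is a facet of $u$ (likewise $y\ne v$). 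So $\nu$ meets exactly the two arcs to $\{u,x\}$ and $\{y,v\}$, which are distinct since $x\ne v$. Finally I would read off the facet sets from the pairing. The arc moving $v$ to the neighbour opposite a facet $G$ of $v$ has facet set equal to the $d$ facets through $u$ together with the $d-1$ facets through $vx$, namely the facets of $v$ other than $G$. In type~A all $2d$ facets of $u$ and of $v$ are distinct, so this set has size $2d-1$ and consists of all facets touching $u$ or $v$ except $G$; as $x$ ranges over the neighbours of $v$ the omitted facet ranges over all $d$ facets of $v$, and dually the $d$ arcs moving $u$ omit the $d$ facets of $u$, so the $2d$ facet sets omit $2d$ distinct facets and are pairwise distinct. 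In type~B the only move of $v$ is to the neighbour opposite $F$, so the $d-1$ facets through $vx$ are the facets of $v$ other than $F$; together with the $d$ facets of $u$, which include $F$, these make up all $2d-1$ facets touching $u$ or $v$, and the symmetric computation for the move of $u$ yields the same set, so $\facets(\alpha)=\facets(\alpha')$.

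The hard part will be the type~B analysis, where the node condition and the arc-admissibility condition are intertwined: I must check simultaneously that the edge at $v$ opposite $F$ is the \emph{only} legal direction in which $v$ may move, that its far endpoint is a genuinely new vertex (distinct from $u$), and that moving there preserves the ``at most one shared facet'' property --- and likewise for $u$. Once the edge--facet pairing and the convexity fact above are in hand, the remaining verifications should be routine.
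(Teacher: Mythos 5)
Your proposal is correct and follows essentially the same route as the paper's proof: classify the arcs at $\nu$ as moves of one endpoint along an edge, use simplicity (the edge--facet pairing at each vertex) to count the admissible moves in each case and to verify that the targets are genuine nodes, and then read off the facet sets by identifying the single ``missing'' facet for each move. You make the vertex-figure/pairing structure and the convexity argument more explicit than the paper does, but the substance is identical, including the checks that the target vertices are distinct from $u$ and $v$ and that the $2d$ (resp.\ two) target nodes are pairwise distinct.
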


\begin{proof}
    We consider the type~A and B cases in turn.
    \begin{enumerate}[(i)]
        \item Let $\nu=\{u,v\}$ be of type~A, and let $x$ be any vertex
        of $P$ adjacent to $v$.
        Since $\nu$ is of type~A, $u$ and $v$ have no facets in common.
        Since $P$ is simple, exactly one facet of $P$ contains $x$ but not $v$.
        Therefore $u$ and $x$ have at most one facet in common,
        and so $\{u,x\}$ is a node of $\auxgraph(P)$.
        Moreover, since no facet contains
        both $u$ and $v$, the nodes $\nu=\{u,v\}$ and $\{u,x\}$
        are joined by an arc.
        Because $P$ is simple there are precisely $d$ vertices adjacent
        to $v$, yielding precisely $d$ arcs of this type.

        A similar argument shows that there are another $d$ arcs
        that join $\nu$ to nodes $\{y,v\}$ where $y$ is any vertex of
        $P$ adjacent to $u$, yielding a total of $2d$ arcs that meet
        $\nu$ overall.

        Consider any arc $\alpha_i$ that joins $\{u,v\}$ with some node
        $\{u,x\}$, where $v$ and $x$ are adjacent vertices as described above.
        Because $u$ and $v$ are complementary, the $d$ facets that touch
        $u$ and the $d$ facets that touch $v$ are all distinct,
        and it follows that the facet set
        $\facets(\alpha_i)$ contains all $2d$ of
        these facets except for the unique facet $F_{v,x}$ that touches
        $v$ but not the adjacent vertex $x$.

        As we cycle through all $d$ possibilities for the adjacent
        vertex $x$, this
        ``missing'' facet $F_{v,x}$ cycles through all $d$ facets that
        touch $v$.  Likewise, when we consider arcs $\alpha_i$ that join
        $\{u,v\}$ with some node $\{y,v\}$, the ``missing'' facet cycles
        through all $d$ facets that touch $u$.
        Since these $2d$ ``missing'' facets are all distinct,
        it follows that the $2d$ facet sets
        $\facets(\alpha_1),\ldots,\facets(\alpha_{2d})$ are likewise
        distinct.

        \smallskip

        \item Now let $\nu=\{u,v\}$ be of type~B, and let $F$ be the
        unique facet of $P$ containing both $u$ and $v$.  If there is
        an arc from $\nu$ to any node of the form $\{u,x\}$,
        it is clear from Definition~\ref{d-auxgraph} that we must have
        $x$ adjacent to $v$ and $x \notin F$.  Because $P$ is simple,
        there is precisely one $x$ with these properties.

        We now show that an arc from $\nu$ to $\{u,x\}$ does indeed exist.
        Because $P$ is simple, at most one facet of $P$ contains
        $x$ but not $v$.  The vertex $v$ in turn has only the facet $F$
        in common with $u$; since $x \notin F$ it follows that
        $u$ and $x$ have at most one facet in common.  Therefore $\{u,x\}$ is
        a node of $\auxgraph(P)$.  Because $x \notin F$ there is
        no facet containing all of $u$, $v$ and $x$, and so the arc from $\nu$
        to $\{u,x\}$ exists.

        A similar argument applies to the arc from $\nu$ to $\{y,v\}$,
        yielding precisely two arcs that meet $\nu$ as described in the
        lemma statement.

        Consider the arc $\alpha$ that joins $\{u,v\}$ with $\{u,x\}$
        as described above.
        The only facet of $P$ that touches $v$ but does not
        contain the edge $vx$ is the common facet $F$, which nevertheless
        touches the vertex $u$.
        Therefore the facet set
        $\facets(\alpha)$ is precisely the set of all facets that touch
        either $u$ or $v$.  The same is true for the second arc
        $\alpha'$ that joins $\{u,v\}$ with $\{y,v\}$,
        whereby we obtain $\facets(\alpha')=\facets(\alpha)$.
    \end{enumerate}

    To finish, we note that every vertex belongs to $d>1$ facets,
    and so every node $\{u,v\}$ of $\auxgraph(P)$ has $u \neq v$.
    This ensures that we do not double-count arcs in our argument;
    that is, the $2d$ arcs in case~(i) join $\nu$ to $2d$ distinct nodes of
    $\auxgraph(P)$, and likewise the two arcs in case~(ii) join $\nu$
    to two distinct nodes of $\auxgraph(P)$.
\end{proof}

Our overall strategy for proving Theorem~\ref{t-comp} is to show that,
if we follow any path from a type~A node of $\auxgraph(P)$,
we must arrive at some \emph{different} type~A node; that is,
we obtain a new pair of complementary vertices.
Furthermore, we show that some such path has one or more
intermediate type~B nodes,
and as a result the type~A nodes that it connects must
represent \emph{disjoint} complementary pairs.
The details are as follows.

\begin{proof}[Proof of Theorem~\ref{t-comp}]
    To establish Theorem~\ref{t-comp}, we must prove that if
    $\auxgraph(P)$ contains at least one type~A node then it contains at
    least two type~A nodes, and that moreover we can find
    two type~A nodes that represent four distinct vertices of $P$.

    Let $\nu$ be a type~A node, and let $\alpha$ be any arc
    meeting $\nu$.  Since every type~B node has degree~two
    (by Lemma~\ref{l-arcs}), this arc
    $\alpha$ begins a well-defined path through $\auxgraph(P)$ that passes
    through zero or more type~B nodes in sequence, until either
    (a)~it arrives at a new type~A node $\nu'$, or (b)~it returns to the
    original type~A node $\nu$ and becomes a cycle
    (see Figure~\ref{fig-paths}).
    Our first task is to prove that case~(b) is impossible.

    \begin{figure}[htb]
    \centering
    \includegraphics{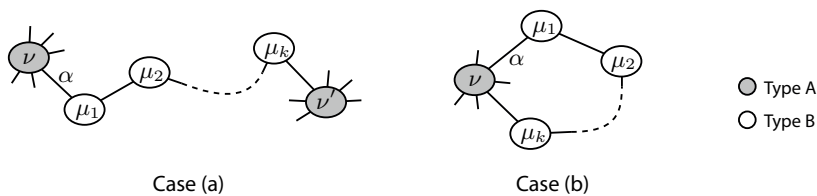}
    \caption{Following a path from the type~A node $\nu$}
    \label{fig-paths}
    \end{figure}

    Suppose then that case~(b) occurs, and there is a cycle that 
    passes through nodes $\nu,\mu_1,\ldots,\mu_k,\nu$ in turn,
    where $\nu$ is of type~A and the intermediate nodes
    $\mu_1,\ldots,\mu_k$ are of type~B.
    By Lemma~\ref{l-arcs}, all arcs on this cycle have identical facet
    sets (since they are joined by type~B nodes).  However,
    Lemma~\ref{l-arcs} also shows that the first and last arcs
    on this cycle must have different facet sets
    (since they both meet the type~A node $\nu$).
    This yields a contradiction.

    Therefore we have a path that joins two distinct type~A nodes $\nu$
    and $\nu'$, which means that our polytope contains two distinct
    pairs of complementary vertices.  All that remains is to show that
    we can find two such pairs that are \emph{disjoint}, i.e., that
    together use four distinct vertices of $P$.

    We claim that, somewhere in the auxiliary graph $\auxgraph(P)$,
    there is at least one path of the form
    $\nu,\mu_1,\ldots,\mu_k,\nu'$ where the end nodes $\nu,\nu'$ are of
    type~A, the intermediate nodes $\mu_1,\ldots,\mu_k$ are of type~B,
    and the path contains \emph{at least one type~B node} (i.e., $k \geq 1$).
    To see this:
    let $\{u,v\}$ be a pair of complementary vertices, and let
    $u=z_1,z_2,\ldots,z_q=v$ be a path that follows edges of the
    polytope $P$ from vertex $u$ to $v$.
    Since $\{z_1,v\}$ is a complementary pair but $\{z_q,v\}$ is not,
    there must be some $i$ for which $\{z_i,v\}$ is a complementary
    pair but $\{z_{i+1},v\}$ is not.  It follows that,
    since $z_i$ and $z_{i+1}$ are adjacent in $P$, there must be a
    type~A node $\{z_i,v\}$ with an arc leading to the type~B node
    $\{z_{i+1},v\}$.  Following this arc yields a path of the form that
    we seek.

    We can therefore consider such a path
    $\nu,\mu_1,\ldots,\mu_k,\nu'$ in $\auxgraph(P)$ that joins two
    type~A nodes and contains $k \geq 1$ type~B nodes.
    We claim that the endpoints $\nu,\nu'$ of this path must be
    \emph{disjoint} pairs of complementary vertices.
    Suppose this is not true: since $\nu \neq \nu'$ we can assume
    that $\nu=\{u,v\}$ and $\nu'=\{u,w\}$ for three distinct vertices
    $u,v,w$ of $P$.
    As before, all arcs on this path must have identical facet sets
    (since they are joined by type~B nodes);
    let this common facet set be $\Phi=\{F_1,\ldots,F_{2d-1}\}$.  By applying
    Lemma~\ref{l-arcs} to the type~A endpoints, we must be in one of the
    following situations:
    \begin{enumerate}
        \item All $d$ facets that touch $u$ are in $\Phi$.
        Without loss of generality, suppose these facets are
        $F_1,\ldots,F_d$.  By Lemma~\ref{l-arcs} again,
        the remaining facets $F_{d+1},\ldots,F_{2d-1}$ touch both
        $v$ and $w$, and their intersection
        $F_{d+1} \cap \ldots \cap F_{2d-1}$ must therefore be a face of $P$
        that contains both $v$ and $w$.
        Because $P$ is simple this face has codimension $\geq d-1$;
        that is, $v$ and $w$ are joined by an \emph{edge},
        and are therefore adjacent in $P$.

        It follows that the nodes $\nu=\{u,v\}$ and $\nu'=\{u,w\}$ are
        joined in $\auxgraph(P)$ by a single arc $\alpha$, whose facet set
        must again be $\Phi=\{F_1,\ldots,F_{2d-1}\}$.
        This means that $\nu$ has two outgoing arcs with
        the same facet set $\Phi$ (one to $\mu_1$ and one to $\nu'$),
        contradicting Lemma~\ref{l-arcs} which states that the facet
        sets of arcs leaving a type~A node must all be distinct.

        \smallskip

        \item Only $d-1$ of the facets that touch $u$ are in $\Phi$.
        Without loss of generality, suppose these facets are
        $F_1,\ldots,F_{d-1}$.  By applying Lemma~\ref{l-arcs} to the type~A
        node $\nu=\{u,v\}$, it follows that the $d$ facets that meet
        $v$ must be $F_d,\ldots,F_{2d-1}$.  Likewise, applying
        Lemma~\ref{l-arcs} to the type~A node $\nu'=\{u,w\}$,
        we find that the $d$
        facets that meet $w$ must be $F_d,\ldots,F_{2d-1}$.
        Therefore $v$ and $w$ meet the same facets of $P$, contradicting
        the assumption that $v$ and $w$ are distinct vertices of $P$.
        \qedhere
    \end{enumerate}
\end{proof}

\begin{boldremark}
    The proof of Theorem~\ref{t-comp} is algorithmic: given a
    simple polytope $P$ of dimension $d>1$ and a pair of complementary
    vertices $u,v \in P$, it gives an explicit algorithm for
    locating a second pair of complementary vertices.

    In essence, we arbitrarily replace one of the vertices $u$
    with an adjacent vertex $u'$,
    and then repeatedly adjust this pair of vertices according to
    Lemma~\ref{l-arcs} part~(ii)
    until we once again reach a pair of complementary vertices.
    For each adjustment, Lemma~\ref{l-arcs} part~(ii) gives two options
    (corresponding to the two arcs that meet a type~B node);
    we always choose the option that leads us ``forwards'' to a new
    pair, and not ``backwards'' to the pair we had immediately before.

    The proof above ensures that we will eventually reach a complementary
    pair of vertices again, and that these will not be the same as the
    original pair $u,v$.  The proof also gives a (more complex)
    algorithmic procedure for obtaining two \emph{disjoint} pairs of
    complementary vertices; note that these might both be different from
    the pair that we started with.
\end{boldremark}

Passing to the dual polytope, Theorem~\ref{t-comp} gives us
an immediate corollary:

\begin{corollary} \label{c-disjoint}
    Let $P$ be a simplicial polytope of dimension $d>1$.
    If $P$ has a pair of disjoint facets,
    then $P$ has at least two disjoint pairs of disjoint facets.
\end{corollary}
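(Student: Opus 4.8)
The plan is to reduce the statement to Theorem~\ref{t-comp} by passing to the polar dual polytope. Let $P^{*}$ denote the polar dual of $P$. Since $P$ is a simplicial $d$-polytope, $P^{*}$ is a simple $d$-polytope, and $d>1$, so $P^{*}$ satisfies the hypotheses of Theorem~\ref{t-comp}. The face lattice of $P^{*}$ is the face lattice of $P$ turned upside down: there is an inclusion-reversing bijection $F \mapsto F^{*}$ between faces of $P$ and faces of $P^{*}$, under which a $k$-face of $P$ corresponds to a $(d-1-k)$-face of $P^{*}$. In particular, facets of $P$ correspond to vertices of $P^{*}$, and vertices of $P$ correspond to facets of $P^{*}$.

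Next I would check that this bijection carries the notion of ``disjoint facets'' to the notion of ``complementary vertices''. If $F$ and $G$ are facets of $P$, then a vertex $w$ of $P$ lies on both $F$ and $G$ exactly when the facet $w^{*}$ of $P^{*}$ contains both vertices $F^{*}$ and $G^{*}$; hence $F$ and $G$ share no vertex if and only if $F^{*}$ and $G^{*}$ lie on no common facet, i.e., $F^{*}$ and $G^{*}$ are complementary vertices of $P^{*}$. Moreover, two pairs of facets $\{F,G\}$ and $\{F',G'\}$ of $P$ involve four distinct facets precisely when the corresponding pairs $\{F^{*},G^{*}\}$ and $\{F'^{*},G'^{*}\}$ involve four distinct vertices of $P^{*}$; that is, ``disjoint pairs of disjoint facets'' in $P$ corresponds exactly to ``disjoint pairs of complementary vertices'' in $P^{*}$.

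Finally, given that $P$ has a pair of disjoint facets, the correspondence produces a pair of complementary vertices of $P^{*}$, so Theorem~\ref{t-comp} applies and yields two disjoint pairs of complementary vertices of $P^{*}$. Translating back through the bijection gives two disjoint pairs of disjoint facets of $P$, as required.

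I do not expect a genuine obstacle here: the argument is a routine dualization. The only points needing care are the standard facts that the polar dual of a simplicial polytope is simple and preserves dimension (so the hypothesis $d>1$ transfers), and the bookkeeping that the inclusion-reversing bijection sends ``shares a vertex'' to ``shares a facet'' and preserves the distinctness of the four faces involved.
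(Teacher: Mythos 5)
Your proposal is correct and follows exactly the paper's route: the paper derives Corollary~\ref{c-disjoint} from Theorem~\ref{t-comp} by passing to the dual polytope, which is precisely the dualization you carry out (just in more explicit detail than the paper, which states it as an immediate corollary).
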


Again, by ``disjoint pairs'' we mean that
these pairs cannot be of the form $\{F,G\}$ and $\{F,H\}$;
instead they must involve four distinct facets of $P$.
We do \emph{not} mean that all four facets are pairwise
disjoint (which in general need not be true).

We now observe that the ``simple'' and ``simplicial''
conditions are necessary in Theorem~\ref{t-comp} and
Corollary~\ref{c-disjoint}.

\begin{observation} \label{o-nonsimple}
    The triangular bipyramid (Figure~\ref{fig-counterex}, left) is a
    non-simple polytope of dimension $d=3$ with precisely one pair of
    complementary vertices (the apexes at the top and bottom, shaded
    in the diagram).

    Its dual is the triangular prism (Figure~\ref{fig-counterex}, right),
    which is a non-simplicial polytope of dimension $d=3$ with precisely
    one pair of disjoint facets (the triangles at the top and bottom,
    shaded in the diagram).
\end{observation}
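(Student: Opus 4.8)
The plan is to verify both assertions directly from the (combinatorial) face lattices, using the standard incidence-reversing duality between the faces of a polytope and those of its polar.

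For the triangular bipyramid $B$, I would first record its combinatorial structure: it has five vertices --- two apexes $N, S$ and three equatorial vertices $e_1, e_2, e_3$ spanning a triangle --- and six triangular facets, each of the form $\{N, e_i, e_j\}$ or $\{S, e_i, e_j\}$ with $i \neq j$. One then observes that $B$ is not simple, because each equatorial vertex $e_i$ lies on the four facets obtained by pairing $e_i$ with $N$ or $S$ and with either of the two remaining equatorial vertices, whereas a simple polytope has exactly $d$ facets through each vertex (here $d = 3$). For the count of complementary pairs: every facet of $B$ contains exactly one apex, so $N$ and $S$ lie on no common facet and hence are complementary; conversely, every pair that involves an equatorial vertex lies on a common facet (for $\{e_i, e_j\}$ and for $\{N, e_i\}$ take $\{N, e_i, e_j\}$, and for $\{S, e_i\}$ take $\{S, e_i, e_j\}$). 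Hence $\{N, S\}$ is the only complementary pair in $B$.

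For the triangular prism $\Pi$, I would use the fact --- asserted in the observation --- that $\Pi$ and $B$ are polar duals. Under polar duality the facets of $\Pi$ correspond bijectively to the vertices of $B$, the vertices of $\Pi$ to the facets of $B$, and inclusions are reversed; consequently two facets of $\Pi$ share a vertex exactly when the two corresponding vertices of $B$ lie on a common facet. Thus disjoint pairs of facets of $\Pi$ correspond precisely to complementary pairs of vertices of $B$, so by the previous paragraph there is exactly one such pair: the one corresponding to $\{N, S\}$. Since $N$ and $S$ each lie on three facets of $B$, the corresponding facets of $\Pi$ are triangles --- the top and bottom triangles of the prism. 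Finally $\Pi$ is not simplicial, since the facet of $\Pi$ corresponding to an equatorial vertex $e_i$ (which lies on four facets of $B$) is a quadrilateral.

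All of this is elementary; the one place that calls for a little care is the bookkeeping in the duality step, namely confirming that ``disjoint facets of $\Pi$'' corresponds exactly to ``complementary vertices of $B$'' and that the single surviving pair is the pair of triangular facets, not a pair of square facets. Readers who prefer to avoid duality can instead check $\Pi$ on its own terms: its two triangular facets share no vertex, whereas every other pair of facets --- triangle with square, or square with square --- meets in an edge, so $\Pi$ has precisely one disjoint pair of facets and is plainly non-simplicial.
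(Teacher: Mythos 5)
Your verification is correct, and it fills in details that the paper itself leaves entirely to inspection of Figure~\ref{fig-counterex}: the paper states this as an \emph{Observation} with no written proof. Both your duality argument and your direct check of the prism are sound, and they match the (implicit) reasoning the paper expects the reader to perform.
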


\begin{figure}[htb]
\centering
\includegraphics[scale=0.6]{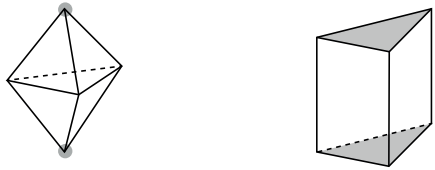}
\caption{A triangular bipyramid (left) and a triangular prism (right)}
\label{fig-counterex}
\end{figure}

These constructions are easily generalised.
For instance, we can build a non-simple bipyramid over a
neighbourly polytope: the two apexes form the unique pair
of complementary vertices, and all other pairs of vertices are adjacent.
Felikson and Tumarkin provide further examples in the dual setting
\cite{felikson09-coxeter},
involving non-simplicial Coxeter polytopes with precisely
one pair of disjoint facets.

We finish this section with a parity result for the case of a
$d$-dimensional polytope with precisely $2d$ facets.
This is a ``smallest case scenario'', in that any polytope with a pair of
complementary vertices must have at least $2d$ facets (since
$d$ facets must touch each vertex).
In this setting we can be more precise:

\begin{theorem} \label{t-2d}
    Let $P$ be a simple polytope of dimension $d > 1$
    with precisely $2d$ facets.
    Then the total number of pairs of complementary vertices of $P$
    is even.
    Moreover, all of these pairs are pairwise disjoint
    (i.e., no two pairs have a vertex in common).
\end{theorem}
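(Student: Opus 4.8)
The plan is to exploit how restrictive the hypothesis ``exactly $2d$ facets'' is, and to extract both conclusions from the auxiliary graph $\auxgraph(P)$. I would begin with the elementary count: since $P$ is simple, every vertex lies on exactly $d$ facets, so if $\{u,v\}$ is a complementary pair then the $d$ facets through $u$ and the $d$ facets through $v$ are disjoint and hence together exhaust all $2d$ facets of $P$; equivalently, the set of facets through $v$ is exactly the complement of the set of facets through $u$. Since distinct vertices of a polytope have distinct facet sets (each vertex equals the intersection of the facets containing it), it follows that a vertex can be complementary to at most one other vertex. This already establishes the ``moreover'' clause: no two complementary pairs of $P$ share a vertex.

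For the parity claim I would assign to each arc $\alpha$ of $\auxgraph(P)$ its \emph{missing facet} $m(\alpha)$, the unique facet of $P$ not contained in $\facets(\alpha)$; this is well defined because $|\facets(\alpha)| = 2d-1$. By Lemma~\ref{l-arcs}(ii) the two arcs at a type~B node share a facet set and hence a missing facet, so $m(\cdot)$ is constant along any path running through type~B nodes only. By Lemma~\ref{l-arcs}(i) the $2d$ arcs at a type~A node $\{u,v\}$ have pairwise distinct facet sets, each omitting one of the $2d$ facets that touch $u$ or $v$; but in our setting those $2d$ facets are \emph{all} the facets of $P$, so the $2d$ arcs at a type~A node realise the $2d$ facets of $P$ as their missing facets, each exactly once. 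In particular, for every facet $F$ of $P$, each type~A node has precisely one incident arc with missing facet $F$.

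Now fix a facet $F$. From a type~A node $\nu$, follow its unique incident arc with missing facet $F$; as in the proof of Theorem~\ref{t-comp}, this begins a path $\nu,\mu_1,\dots,\mu_k,\nu'$ through zero or more type~B nodes, ending at a type~A node $\nu'$. It cannot close up at $\nu$: otherwise its first and last arcs would be two distinct arcs incident to $\nu$ both having missing facet $F$, contradicting the uniqueness just established; hence $\nu' \neq \nu$. The missing facet equals $F$ all along the path, so $\nu'$ is reached through \emph{its} unique $F$-arc, and — since every intermediate $\mu_i$ has degree two — the path is uniquely reversible, so running the same procedure at $\nu'$ returns to $\nu$. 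Thus ``joined by an $F$-path'' is a fixed-point-free involution on the set of type~A nodes, i.e.\ a perfect matching on that set, so the number of type~A nodes is even. Since type~A nodes are exactly the complementary pairs of $P$, their number is even.

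The only delicate point is this last involution: one must check that following an $F$-arc out of a type~A node genuinely terminates at a \emph{different} type~A node (the ``no cycle'' argument, here made very short by uniqueness of the $F$-arc) and that the induced partner map is symmetric and independent of any choices made en route — this is exactly where the degree-two property of type~B nodes is used, guaranteeing that the connecting path is uniquely determined and traversable in both directions. Everything else is routine bookkeeping with facet counts.
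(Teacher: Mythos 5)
Your proof is correct and follows essentially the same route as the paper: both rest on the observation that, with only $2d$ facets, every type~A node of $\auxgraph(P)$ has exactly one incident arc whose facet set omits a fixed facet $F$, while every type~B node has zero or two such arcs, and the disjointness argument (a complementary partner must touch exactly the complementary set of facets, and facet sets determine vertices) is the paper's verbatim. The only divergence is the final step for parity: the paper concludes immediately by the handshake lemma applied to the subgraph of arcs with facet set $\Phi^\ast\setminus\{F\}$, whereas you make the resulting perfect matching on type~A nodes explicit by following paths as in the proof of Theorem~\ref{t-comp} --- slightly longer, but it additionally exhibits a canonical pairing of the complementary pairs for each choice of $F$.
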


\begin{proof}
    Denote the facets of $P$ by $F_1,\ldots,F_{2d}$,
    and let $\Phi^\ast$ denote the full set of facets
    $\Phi^\ast=\{F_1,\ldots,F_{2d}\}$.
    For each $i=1,\ldots,2d$,
    let $\Phi^{(i)}$ denote the $(2d-1)$-element subset
    $\Phi^{(i)} = \Phi^\ast \backslash \{F_i\}$.

    Let $\nu=\{u,v\}$ be any type~A node in the auxiliary graph
    $\auxgraph(P)$.  Since $u$ and $v$ are complementary, all $2d$ facets
    $F_1,\ldots,F_{2d}$ must touch either $u$ or $v$, and it follows from
    Lemma~\ref{l-arcs} that the $2d$ distinct facet sets for the $2d$ arcs
    that meet $\nu$ must be $\Phi^{(1)},\ldots,\Phi^{(2d)}$ in some order.

    In particular, for each type~A node $\nu$, \emph{exactly one}
    arc meeting $\nu$ has the facet set $\Phi^{(1)}$.  On the other
    hand, by Lemma~\ref{l-arcs} again, for each type~B node $\nu'$
    either both arcs meeting $\nu'$ have the facet set $\Phi^{(1)}$,
    or else neither do.
    Because each arc has two endpoints, it follows that the
    total number of type~A nodes must be even; that is,
    the total number of pairs of complementary vertices of $P$ is even.

    We now show that no two of these pairs have a vertex in common.
    Suppose $\{u,v\}$ and $\{u,w\}$ are both complementary
    pairs of vertices, and without loss of generality suppose that $u$
    touches the $d$ facets $F_1,\ldots,F_d$.  Then because the
    first pair is complementary, $v$ must touch the $d$ facets
    $F_{d+1},\ldots,F_{2d}$;
    likewise, because the second pair is complementary, $w$ must touch
    the $d$ facets $F_{d+1},\ldots,F_{2d}$.  Therefore both $v$ and $w$ touch
    the same facets of $P$, and must be the same vertex of $P$.
\end{proof}

Again, we obtain an immediate corollary in the dual setting:

\begin{corollary}
    Let $P$ be a simplicial polytope of dimension $d > 1$
    with precisely $2d$ vertices.
    Then the total number of pairs of disjoint facets of $P$
    is even.
    Moreover, all of these pairs are pairwise disjoint
    (i.e., no two pairs have a facet in common).
\end{corollary}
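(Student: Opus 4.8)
The plan is to deduce this corollary from Theorem~\ref{t-2d} by passing to the polar dual polytope, exactly as Corollary~\ref{c-disjoint} was deduced from Theorem~\ref{t-comp}. Let $P^\ast$ denote the polar dual of $P$. Since $P$ is a simplicial $d$-polytope, $P^\ast$ is a simple $d$-polytope; and since the facets of $P^\ast$ are in inclusion-reversing bijection with the vertices of $P$, the hypothesis that $P$ has precisely $2d$ vertices translates into $P^\ast$ having precisely $2d$ facets. As $d > 1$ is also preserved, $P^\ast$ satisfies all the hypotheses of Theorem~\ref{t-2d}.

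Next I would spell out the combinatorial dictionary supplied by polarity (see Ziegler~\cite{ziegler95}): the face lattice of $P^\ast$ is that of $P$ turned upside down, so a facet of $P^\ast$ contains a vertex of $P^\ast$ if and only if the corresponding vertex of $P$ lies on the corresponding facet of $P$. Consequently, two vertices of $P^\ast$ lie on no common facet of $P^\ast$ --- that is, they form a complementary pair --- precisely when the two corresponding facets of $P$ share no common vertex, i.e.\ form a pair of disjoint facets. This yields a bijection between the pairs of disjoint facets of $P$ and the pairs of complementary vertices of $P^\ast$, which moreover carries ``sharing a facet of $P$'' to ``sharing a vertex of $P^\ast$''.

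Finally I would apply Theorem~\ref{t-2d} to $P^\ast$: the number of complementary vertex pairs of $P^\ast$ is even, and these pairs are pairwise disjoint in the sense that no two of them share a vertex of $P^\ast$. Transporting both conclusions through the bijection of the previous paragraph gives that the number of disjoint facet pairs of $P$ is even and that no two of them share a facet of $P$, which is precisely the assertion of the corollary.

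The argument is essentially bookkeeping, so I do not expect a serious obstacle; the only point requiring care is the verification of the dual dictionary --- in particular confirming that ``$2d$ vertices'' for a simplicial polytope corresponds to ``$2d$ facets'' for a simple polytope, and that disjointness of faces is exchanged with the complementarity condition under inclusion-reversal --- after which Theorem~\ref{t-2d} applies verbatim.
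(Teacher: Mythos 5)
Your proposal is correct and matches the paper's approach exactly: the paper presents this corollary as an immediate consequence of Theorem~\ref{t-2d} by passing to the dual polytope, which is precisely the duality argument you spell out. The only difference is that you make explicit the standard polarity dictionary that the paper leaves implicit.
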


Theorem~\ref{t-2d} cannot be extended to an arbitrary number of facets,
since in the general case the total number of pairs of complementary
vertices could be either odd or even:

\begin{figure}[htb]
    \centering
    \includegraphics[scale=0.5]{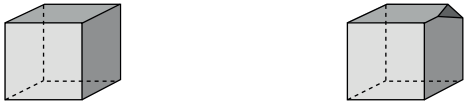}
    \caption{Even and odd numbers of complementary vertex pairs}
    \label{fig-parity}
\end{figure}

\begin{observation} \label{o-odd}
    The 3-dimensional cube (Figure~\ref{fig-parity}, left) is a simple
    polytope with a positive even number of pairs of
    complementary vertices.

    The 3-dimensional cube with one truncated vertex
    (Figure~\ref{fig-parity}, right) is a simple polytope
    with a positive odd number of pairs of complementary vertices.
\end{observation}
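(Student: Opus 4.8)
The plan is to settle both claims by direct verification in suitable coordinates, since each concerns a single small polytope.

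First I would handle the cube. Realise $P$ as $\{0,1\}^3$; its six facets are the sections $\{x_i=0\}$ and $\{x_i=1\}$ for $i=1,2,3$, and each of the eight vertices meets exactly one of these for each coordinate, so $P$ is simple. Two vertices share a common facet precisely when they agree in some coordinate, so a complementary pair is exactly an antipodal pair; the eight vertices form four antipodal pairs, so $P$ has exactly four complementary pairs, which is a positive even number.

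Next I would treat the truncated cube $P'$. The first step is to record that truncating a single vertex $w$ of a simple $d$-polytope always yields a simple polytope: the truncation creates one new facet $T$ and $d$ new vertices, each of which lies on $T$ together with $d-1$ of the old facets through $w$, while every surviving vertex retains exactly its original $d$ facets. Applied to $w=(1,1,1)$, truncation replaces $w$ by three new vertices $a,b,c$ (one on each edge at $w$), turns each facet $\{x_i=1\}$ into a pentagon, leaves each $\{x_i=0\}$ a square, and adds a triangular facet $T=\{a,b,c\}$. The second step is to count complementary pairs, which I would split into two cases. A pair of surviving old vertices still meets exactly the facets it met in the cube (identifying the pentagon with the old $\{x_i=1\}$), so such a pair is complementary iff it was antipodal in the cube; of the four old antipodal pairs only $\{(0,0,0),(1,1,1)\}$ is destroyed, leaving three. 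Any other complementary pair must contain one of $a,b,c$, and these three pairwise share $T$, so such a pair contains exactly one new vertex; a short check (using the $S_3$-symmetry that permutes $a,b,c$) shows $a$ is complementary exactly to the old vertices with $x_2=x_3=0$, and similarly for $b,c$, giving six more pairs. Hence $P'$ has $3+6=9$ complementary pairs, a positive odd number.

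I do not expect a genuine obstacle here: the content is a finite check. The only step needing care is the combinatorics of the truncation — which facets become pentagons, where the new vertices lie, and the verification that $P'$ remains simple — together with organising the enumeration so that no complementary pair is missed or counted twice, for which the division into ``pairs of surviving old vertices'' and ``pairs containing a new vertex'' is exactly the right device.
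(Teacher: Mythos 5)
Your proposal is correct and matches the paper, which simply asserts that ``a careful count shows that these examples have four and nine pairs of complementary vertices respectively''; your explicit enumeration (four antipodal pairs for the cube; three surviving antipodal pairs plus six pairs involving the new triangle vertices for the truncated cube) carries out exactly that count and arrives at the same numbers.
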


A careful count shows that these examples have four and nine pairs of
complementary vertices respectively.

%
%

\section{Adjacency Testing} \label{s-adj}

In this section we prove our main algorithmic result, which uses
Theorem~\ref{t-comp} to identify simple polytopes and
test for adjacent vertices.
Throughout this section we work with polytopes of the form
\begin{equation}
    P = \{\mathbf{x}\in\R^n\,|\,A\mathbf{x}=\mathbf{b}\ \mbox{and}\
    \mathbf{x}\geq 0\},
    \label{eqn-std}
\end{equation}
where $A$ is some $n$-column matrix.
This form is standard in mathematical programming,
and appears in key applications of vertex enumeration
\cite{benson98-outer,burton10-dd}.  Note that the dimension of $P$ is not
immediately clear from (\ref{eqn-std}),
although $n - \rank A$ gives an upper bound;
likewise, it is not immediately clear
whether $P$ is simple.

We recall some standard terminology from polytope theory:
if $F$ and $G$ are faces of the polytope $P$, then
the \emph{join} $F \vee G$ is the unique smallest-dimensional
face that contains both $F$ and $G$ as subfaces.
For example, recall the cube from Figure~\ref{fig-cube}.
For vertices $B$ and $C$, the join $B \vee C$ is the edge $\overline{BC}$,
whereas for vertices $A$ and $H$, the join $A \vee H$ is the square
facet $\mathit{ADHE}$ (the smallest face containing both vertices).
For edges $\overline{\mathit{AD}}$ and $\overline{\mathit{DC}}$,
the join $\overline{\mathit{AD}} \vee \overline{\mathit{DC}}$
is the square facet $\mathit{ABCD}$.
For the complementary vertices $C$ and $E$, the join
$C \vee E$ is the entire cube (since $C$ and $E$ do not coinhabit any
lower-dimensional face).

Our main algorithmic result is the following:

\begin{theorem} \label{t-precomp}
    Consider any polytope
    $P = \{\mathbf{x}\in\R^n\,|\allowbreak\,A\mathbf{x}=\mathbf{b}\allowbreak
    \ \mathrm{and}\ \mathbf{x}\geq 0\}$,
    and suppose we have a list of all vertices of $P$,
    with $V$ vertices in total.
    Then, after a precomputation step that requires
    $O(n^2V + nV^2)$ time and $O(nV^2)$ space:
    \begin{itemize}
        \item we know immediately the dimension of $P$ and
        whether or not $P$ is simple;
        \item if $P$ is simple then, given any two vertices $u,v \in P$,
        we can test whether or not $u$ and $v$ are adjacent in $O(n)$ time;
        \item if $P$ is not simple then, given any two vertices $u,v \in P$,
        we may still be able to identify that $u$ and $v$ are non-adjacent
        in $O(n)$ time.
    \end{itemize}
\end{theorem}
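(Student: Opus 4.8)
The plan is to encode the faces of $P$ by the coordinates that vanish on them, reduce the adjacency question to counting how often a given join is realised, and then invoke Theorem~\ref{t-comp} to pin that count down when $P$ is simple. Concretely, for each vertex $v$ let $Z(v)=\{\,i:v_i=0\,\}\subseteq\{1,\dots,n\}$ be its \emph{zero set}. Because $P$ is cut out by the inequalities $x_i\geq 0$, every face of $P$ has the form $F_S=\{\mathbf{x}\in P:x_i=0\text{ for all }i\in S\}$, a vertex $w$ lies on $F_S$ exactly when $S\subseteq Z(w)$, and hence the join $u\vee v$ equals $F_{S_{uv}}$ where $S_{uv}:=Z(u)\cap Z(v)$. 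Moreover $S_{uv}=\bigcap\{\,Z(w):w\text{ a vertex of }u\vee v\,\}$, so $S_{uv}$ is an invariant of the face $u\vee v$, and $u\vee v=u'\vee v'$ if and only if $S_{uv}=S_{u'v'}$.

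The precomputation then proceeds as follows. (i) Record all $Z(v)$ as length-$n$ bit-vectors in $O(nV)$ time. (ii) Compute $d=\dim P$ as the rank of the $(V-1)\times n$ matrix with rows $v_j-v_1$, by Gaussian elimination in $O(nV\min(n,V))$ time, which is within budget. (iii) Identify the facets of $P$: by the description above, a facet is $F_{\{i\}}$ for exactly those coordinates $i$ whose vanishing set $N_i=\{\,w:w_i=0\,\}$ is $\subseteq$-maximal among the proper nonempty members of $\{N_1,\dots,N_n\}$, and these maximal sets can be extracted with $O(n^2)$ bit-vector comparisons of cost $O(V)$ each; then, for every vertex, count how many of these facets contain it, and declare $P$ simple precisely when every count equals $d$. (iv) Run over all $\binom{V}{2}$ pairs $u,v$, form $S_{uv}$ in $O(n)$ time, and tally in a hash table keyed by these bit-vectors the multiplicity $m(S)=\#\{\,\{u,v\}:S_{uv}=S\,\}$. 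Steps (iii) and (iv) cost $O(n^2V)$ and $O(nV^2)$ time respectively, with $O(nV^2)$ space for the tallies, so the whole precomputation meets the advertised $O(n^2V+nV^2)$ time and $O(nV^2)$ space bounds; and given a query pair $u,v$, recomputing $S_{uv}$ and reading off $m(S_{uv})$ takes $O(n)$ time.

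The test reports, when $P$ is simple, that $u,v$ are adjacent iff $m(S_{uv})=1$; when $P$ is not simple, it reports that $u,v$ are non-adjacent when $m(S_{uv})>1$ and is undetermined otherwise. One implication holds with no simplicity hypothesis: if $u,v$ are adjacent then $u\vee v$ is the edge $\overline{uv}$, whose only vertices are $u$ and $v$, so $\{u,v\}$ is the only pair realising this join and $m(S_{uv})=1$; contrapositively, $m(S_{uv})>1$ forces $u,v$ non-adjacent, which is exactly the filter asserted in the non-simple case. For the converse when $P$ is simple, suppose $u,v$ are \emph{not} adjacent; then $F:=u\vee v$ has $\dim F\geq 2$ and no proper face of $F$ contains both $u$ and $v$, i.e.\ $u,v$ are complementary vertices of $F$. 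As a face of a simple polytope, $F$ is itself a simple polytope of dimension $>1$, so Theorem~\ref{t-comp} supplies a second pair $\{u',v'\}\neq\{u,v\}$ of complementary vertices of $F$; complementarity in $F$ gives $u'\vee v'=F=u\vee v$, hence $S_{u'v'}=S_{uv}$ and $m(S_{uv})\geq 2$. Thus for simple $P$, adjacency of $u,v$ is equivalent to $m(S_{uv})=1$.

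The only substantive step is this last equivalence: everything else is bookkeeping, but ``adjacent $\iff$ join realised exactly once'' rests squarely on Theorem~\ref{t-comp} (and on the standard fact that faces of simple polytopes are simple), and this is precisely where simplicity is essential --- in the non-simple case a non-adjacent complementary pair, such as the two apexes of a bipyramid, can realise a join uniquely, which is why the test degrades to a one-sided filter in general. The remaining loose ends are routine: coordinates that vanish on all of $P$ lie in every $Z(v)$ and every $S_{uv}$, so once we restrict to proper nonempty $N_i$ they affect neither the facet list nor the multiplicities; distinct coordinates describing the same facet must be collapsed before counting facets through a vertex; and the rank computation, the facet extraction, and the $\binom{V}{2}$ intersections must each be checked to stay within the stated time and space budgets.
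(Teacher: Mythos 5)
Your proposal is correct, and the mathematical core---reducing adjacency to ``the join $u\vee v$ is realised by exactly one vertex pair'', proving the unconditional forward direction directly from the definition of an edge, and obtaining the converse for simple $P$ by applying Theorem~\ref{t-comp} to the face $F=u\vee v$ (itself a simple polytope of dimension $>1$) together with the observation that $u\vee v=F$ makes $u,v$ complementary in $F$---is exactly the paper's route (its Theorem~\ref{t-adj}, Lemma~\ref{l-compjoin}, Lemma~\ref{l-zero} and Corollary~\ref{c-adj-map}). You deviate in two implementation choices. First, for the simplicity test the paper reuses the join map: by Corollary~\ref{c-join-simple}, $P$ is simple iff every vertex has exactly $d$ partners $u'$ with $\jmof{P}(Z(u)\cap Z(u'))=1$, so no facet enumeration is needed; you instead extract the facets directly as the $\subseteq$-maximal proper nonempty vanishing sets $N_i$ and count incidences per vertex. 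Both are correct and within the $O(n^2V)$ budget (your maximality characterisation of facets is right, and you correctly flag the need to collapse coordinates defining the same facet), but the paper's version is leaner since the join map is already built. Second, you store the multiplicities in a hash table keyed by $n$-bit vectors, which gives only \emph{expected} $O(n)$ lookups; the paper uses a trie of depth $n$ to get the deterministic worst-case $O(n)$ bound claimed in the theorem. This is easily repaired by substituting the trie, so it is a presentational rather than a substantive gap.
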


We discuss the importance and implications of this result in
Section~\ref{s-disc}; in the meantime, we devote the remainder of this
section to proving Theorem~\ref{t-precomp}.

Our overall strategy is to use our main theoretical result
(Theorem~\ref{t-comp}) to characterise and identify adjacent vertices.
As a first step, we describe complementary vertices in terms of joins:

\begin{lemma} \label{l-compjoin}
    Let $u$ and $v$ be distinct vertices of a polytope $P$.
    Then $u$ and $v$ are complementary vertices if and only if
    $u \vee v = P$.
\end{lemma}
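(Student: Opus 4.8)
The plan is to prove both implications directly from the definitions. For the backward direction, suppose $u$ and $v$ are \emph{not} complementary, so there is some facet $F$ containing both. Since $F$ is a proper face of $P$ (it has dimension $d-1 < d$), the join $u \vee v$ — being the \emph{smallest} face containing both $u$ and $v$ — is contained in $F$, hence is a proper face and cannot equal $P$. Contrapositively, if $u \vee v = P$ then $u$ and $v$ are complementary.

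For the forward direction, suppose $u$ and $v$ are complementary but, for contradiction, $u \vee v = G$ for some proper face $G \subsetneq P$. Every proper face of a polytope is the intersection of the facets containing it; in particular $G$ is contained in at least one facet $F$ of $P$. But then $F$ contains both $u \in G$ and $v \in G$, so $u$ and $v$ share the facet $F$, contradicting complementarity. Hence $u \vee v$ is contained in no facet, which forces $u \vee v = P$ (the only face not contained in a proper face is $P$ itself).

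The two directions together give the equivalence. I expect no real obstacle here: the proof rests only on the standard facts that every proper face of a polytope lies in some facet, and that $P$ is the unique face contained in no proper face. The one point worth stating carefully is why $u \vee v$ is well defined and why it is contained in $F$ whenever $F$ contains both $u$ and $v$ — this is immediate from the definition of the join as the minimal face containing both vertices, together with the fact that the face poset is a lattice (so such a minimal face exists and is unique). I would phrase the argument symmetrically in terms of "the set of facets containing a given face" so that both implications read off the same observation.
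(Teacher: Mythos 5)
Your proof is correct and follows essentially the same route as the paper's: both directions come down to the two standard facts that any facet containing $u$ and $v$ must contain $u \vee v$, and that any proper face of $P$ is contained in some facet. Your write-up is just a slightly more expanded (contrapositive/contradiction) phrasing of the same argument.
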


\begin{proof}
    If $u \vee v \neq P$ then there is some facet
    $F$ for which $u \vee v \subseteq F$; therefore $u,v \in F$,
    and $u$ and $v$ cannot be complementary.

    On the other hand, if $u \vee v = P$ then there is no facet $F$
    for which $u,v \in F$ (otherwise we would have $u \vee v \subseteq F$).
    Therefore $u$ and $v$ are complementary.
\end{proof}

Using this lemma, we can now make a direct link between Theorem~\ref{t-comp}
and adjacency testing in polytopes:

\begin{theorem} \label{t-adj}
    Let $P$ be a simple polytope, and let $F$ be a face of $P$.
    Then $F$ is an edge if and only if
    there is precisely one pair of distinct vertices $u,v \in P$
    for which $F = u \vee v$.

    If $P$ is any polytope (not necessarily simple), then the
    forward direction still holds:
    if $F$ is an edge then there must be
    precisely one pair of distinct vertices $u,v \in P$
    for which $F = u \vee v$.
\end{theorem}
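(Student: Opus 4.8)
The plan is to prove the forward direction in full generality first, since this is the easier half and it suffices for the non-simple case. Suppose $F$ is an edge of an arbitrary polytope $P$. An edge is a $1$-dimensional face, so it has exactly two vertices, say $u$ and $v$; these are distinct, and clearly $u \vee v \subseteq F$. Since $F$ is the smallest face containing $u$, and $u \vee v$ already contains $u$ and $v$, the reverse containment forces $u \vee v = F$. For uniqueness, suppose $u' \vee v' = F$ for some other pair of distinct vertices. Then $u', v' \in F$, but $F$ has only the two vertices $u, v$, so $\{u',v'\} = \{u,v\}$. This handles the forward direction for all polytopes, and in particular the final sentence of the theorem.

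For the reverse direction, assume $P$ is simple and that $F$ is a face with the property that exactly one pair $\{u,v\}$ of distinct vertices satisfies $F = u \vee v$. First I would dispose of the degenerate low-dimensional possibilities: $F$ cannot be a vertex (a vertex is not a join of two \emph{distinct} vertices, so there would be \emph{zero} such pairs, not one), and if $\dim F = 1$ then $F$ is an edge and we are done. So assume for contradiction that $\dim F \geq 2$. The idea is to apply Theorem~\ref{t-comp} \emph{inside $F$}: a face of a simple polytope is itself a simple polytope, so $F$ is a simple polytope of dimension $d' = \dim F \geq 2$. The pair $u, v$ satisfies $u \vee v = F$, which by Lemma~\ref{l-compjoin} (applied to the polytope $F$) means $u$ and $v$ are complementary vertices \emph{of $F$}. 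Then Theorem~\ref{t-comp} gives a second pair $u', v'$ of complementary vertices of $F$, distinct from the first pair, and Lemma~\ref{l-compjoin} again gives $u' \vee v' = F$ — where this join, computed in $F$, agrees with the join computed in $P$ since $F$ is a face of $P$. This produces a second pair of distinct vertices of $P$ whose join is $F$, contradicting uniqueness. Hence $\dim F \leq 1$, and combined with the above, $F$ must be an edge.

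The main obstacle I anticipate is the bookkeeping around \emph{which polytope} the various notions (join, complementary, simple) are being computed in, and making sure these are consistent when we pass from $P$ to the face $F$. Specifically I need: (a) a face of a simple polytope is simple — this is standard (vertex figures of faces are faces of vertex figures, or one can argue via the dual that faces of simple polytopes are simple); (b) for vertices $u,v$ of $F$, the join $u \vee v$ taken within $F$ equals the join taken within $P$ — true because the faces of $P$ contained in $F$ are exactly the faces of $F$; and (c) Theorem~\ref{t-comp} requires $\dim F > 1$, which is exactly the case we are in after excluding $\dim F \leq 1$. None of these is deep, but stating them cleanly is the crux; once they are in place the argument is a short contradiction via Theorem~\ref{t-comp} and Lemma~\ref{l-compjoin}.
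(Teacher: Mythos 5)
Your proposal is correct and follows essentially the same route as the paper: the forward direction is the same observation that an edge contains only its two endpoints, and the reverse direction likewise applies Theorem~\ref{t-comp} to the face $F$ viewed as a simple polytope (after excluding $\dim F\leq 1$), using Lemma~\ref{l-compjoin} to translate between ``$F=u\vee v$'' and ``$u,v$ complementary in $F$''. The only quibble is a garbled sentence in your forward direction (``$F$ is the smallest face containing $u$'' should be an argument that no proper face of an edge contains both endpoints), but the intended reasoning is sound.
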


\begin{proof}
    The forward direction is straightforward: let $F$ be an edge of any
    (simple or non-simple) polytope $P$, and let the endpoints of $F$
    be the vertices $u$ and $v$.  It is clear that $F = u \vee v$, and
    because $F$ contains no other vertices it cannot be expressed as the
    join $x \vee y$ for any other pair of distinct vertices $x,y$.

    We now prove the reverse direction in the case where $P$ is a simple
    polytope.  Let $F$ be a face of $P$, and suppose that $F$ is not an
    edge.  If $\dim F < 1$ then $F$ contains at most one vertex, and
    so there can be no pairs of distinct vertices $u,v$ for which
    $F = u \vee v$.

    Otherwise $\dim F > 1$; moreover, since $P$ is a simple polytope
    then the face $F$ is likewise simple when considered as a polytope
    of its own.  Hence we can invoke Theorem~\ref{t-comp} to finish the
    proof.  If $F = u \vee v$ for distinct vertices $u,v$, then
    Lemma~\ref{l-compjoin} shows that $u,v$ are complementary in
    the ``sub-polytope'' $F$.
    By Theorem~\ref{t-comp} there must be another pair of complementary
    vertices $\{u',v'\} \neq \{u,v\}$ in $F$,
    and by Lemma~\ref{l-compjoin} we have $F = u' \vee v'$ as well.
\end{proof}

To make the join operation accessible to algorithms, we describe faces of
polytopes using \emph{zero sets}.
Fukuda and Prodon \cite{fukuda96-doubledesc} define zero sets for points in $P$;
here we extend this concept to arbitrary faces.

\begin{definition}[Zero set]
    Consider any polytope of the form
    $P = \{\mathbf{x}\in\R^n\,|\allowbreak\,A\mathbf{x}=\mathbf{b}
    \ \mathrm{and}\ \mathbf{x}\geq 0\}$,
    and let $F$ be any face of $P$.
    Then the \emph{zero set} of $F$, denoted $Z(F)$, is the set of
    coordinate positions that take the value zero throughout $F$.
    That is, $Z(F) = \{i\,|\,x_i=0\ \mathrm{for\ all}\ \mathbf{x}\in F\}
    \subseteq \{1,2,\ldots,n\}$.
\end{definition}

Zero sets are efficient for computation: each can be stored and
manipulated using a bitmask of size $n$, which for moderate problems
($n \leq 64$) involves just a single machine-native integer and fast
bitwise CPU instructions.
Joins, equality and subface testing all have natural representations
using zero sets:

\begin{lemma} \label{l-zero}
    Let $F$ and $G$ be non-empty faces of the polytope
    $P = \{\mathbf{x}\in\R^n\,|\allowbreak
    \,A\mathbf{x}=\mathbf{b}\ \mathrm{and}\ \mathbf{x}\geq 0\}$.
    Then:
    \begin{itemize}
        \item $F \subseteq G$ if and only if $Z(F) \supseteq Z(G)$;
        \item $F = G$ if and only if $Z(F) = Z(G)$;
        \item the join $F \vee G$ has zero set $Z(F \vee G) = Z(F) \cap Z(G)$.
    \end{itemize}
\end{lemma}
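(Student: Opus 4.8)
The plan is to prove each of the three statements directly from the definition of zero set, working one bullet at a time and using the earlier bullets where convenient.

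First I would establish the subface criterion $F \subseteq G \iff Z(F) \supseteq Z(G)$. The forward direction is immediate: if $F \subseteq G$ and coordinate $i$ vanishes on all of $G$, then it certainly vanishes on all of $F$, so $Z(G) \subseteq Z(F)$. The reverse direction is the one requiring an actual polytope-theoretic fact: assuming $Z(F) \supseteq Z(G)$, I want to conclude $F \subseteq G$. The key observation is that every face of $P$ is cut out from $P$ exactly by setting its zero-set coordinates to zero --- that is, $G = \{\mathbf{x} \in P \mid x_i = 0 \text{ for all } i \in Z(G)\}$. This is the standard description of faces of a polytope in the form (\ref{eqn-std}): a face is obtained by making some subset of the inequality constraints $x_i \geq 0$ tight, and the maximal such subset is precisely $Z(G)$. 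Granting this, any point of $F$ has $x_i = 0$ for all $i \in Z(F) \supseteq Z(G)$, hence lies in $G$, giving $F \subseteq G$.

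Next, $F = G \iff Z(F) = Z(G)$ follows instantly by applying the first bullet in both directions: $F \subseteq G$ and $G \subseteq F$ together are equivalent to $Z(F) \supseteq Z(G)$ and $Z(G) \supseteq Z(F)$.

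Finally, for the join I would argue that $Z(F) \cap Z(G)$ is exactly the zero set of the smallest face containing both $F$ and $G$. Let $S = Z(F) \cap Z(G)$ and let $H = \{\mathbf{x} \in P \mid x_i = 0 \text{ for all } i \in S\}$, which is a face of $P$ (it is cut out by making a subset of the constraints tight). Since $S \subseteq Z(F)$, every point of $F$ satisfies $x_i = 0$ for $i \in S$, so $F \subseteq H$; similarly $G \subseteq H$. Conversely, if $K$ is any face containing both $F$ and $G$, then by the first bullet $Z(K) \subseteq Z(F)$ and $Z(K) \subseteq Z(G)$, so $Z(K) \subseteq S$; hence every coordinate in $S$ that vanishes — wait, rather: applying the face-description fact to $K$, we have $K = \{\mathbf{x}\in P \mid x_i = 0 \text{ for } i \in Z(K)\} \supseteq \{\mathbf{x} \in P \mid x_i = 0 \text{ for } i \in S\} = H$ since $Z(K) \subseteq S$. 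Thus $H$ is contained in every face containing $F$ and $G$, so $H = F \vee G$, and $Z(F \vee G) = Z(H) \supseteq S$; but also $Z(H) \subseteq Z(F) \cap Z(G) = S$ because $F, G \subseteq H$ forces $Z(H) \subseteq Z(F)$ and $Z(H) \subseteq Z(G)$ by the first bullet. Hence $Z(F \vee G) = S = Z(F) \cap Z(G)$.

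The main obstacle is the reverse direction of the first bullet, which is not purely formal: it rests on the fact that each face of a polytope in standard form is exactly the solution set obtained by tightening precisely the constraints indexed by its zero set (equivalently, that the zero set determines the face). I would cite this as a standard fact about polyhedra in inequality form, or prove it briefly by noting that a face is the intersection of $P$ with a supporting hyperplane, that such a supporting hyperplane for $P$ in form (\ref{eqn-std}) can be taken to be a nonnegative combination of the constraints $x_i \geq 0$ modulo the affine hull, and that the set of tight constraints on the face is exactly its zero set. Everything else is bookkeeping with set inclusions.
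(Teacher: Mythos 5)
Your proposal is correct and follows essentially the same route as the paper: both hinge on the fact that a non-empty face is recovered from $P$ by tightening exactly the constraints indexed by its zero set, and then derive the three bullets by set-theoretic bookkeeping (the paper proves the third bullet by contradiction where you construct the join explicitly, but this is a cosmetic difference). The paper justifies the key fact the same way you suggest, citing the standard result that every non-empty face is an intersection of $P$ with some of the hyperplanes $x_i=0$ and then arguing that the maximal such index set is precisely $Z(F)$.
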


\begin{proof}
    These are all consequences of the fact that every non-empty face
    $F \subseteq P$ is the intersection of $P$
    with all hyperplanes of the form $x_i=0$ where $i \in Z(F)$.
    The details are as follows.

    For each $i=1,\ldots,n$, let
    $H_i$ be the hyperplane
    $H_i=\{\mathbf{x}\in\mathbb{R}^n\,|\,x_i=0\}$.
    It is a standard result that every non-empty face of $P$
    is the intersection of $P$ with some set of hyperplanes $H_i$,
    and conversely that any intersection of $P$ with some set of hyperplanes
    $H_i$ yields a (possibly empty) face of $P$.

    We claim that every non-empty face $F \subseteq P$ satisfies
    $F = P \cap \left(\bigcap_{i \in Z(F)} H_i\right)$;
    that is, $F$ is
    obtained by intersecting $P$ with every $H_i$ for which $i \in Z(F)$.

    To prove this claim, consider the face
    $F' = P \cap \left(\bigcap_{i \in Z(F)} H_i\right)$.
    By definition of $Z(F)$ it is clear that $F \subseteq F'$.
    If $F \neq F'$ then it follows that $F$ is a strict subface of $F'$,
    and so there must be some additional hyperplane $H_j$ with $j \notin Z(F)$
    for which $F \subseteq F' \cap H_j$.  This in turn would imply
    that $j \in Z(F)$, a contradiction.  Therefore $F=F'$.

    We now prove the individual statements of the lemma.
    If $F \subseteq G$
    then it is clear by definition of the zero set
    that $Z(F) \supseteq Z(G)$.  Conversely,
    if $Z(F) \supseteq Z(G)$ then it follows from the claim above that
    $F$ is the intersection of $G$ with zero or more additional
    hyperplanes $H_i$, and so $F \subseteq G$.
    This proves the first statement, and the second statement
    of the lemma now follows immediately.

    We finish with the third statement.
    Since $F,G \subseteq F \vee G$, it follows from the first
    statement that $Z(F \vee G) \subseteq Z(F) \cap Z(G)$.
    Suppose now that $Z(F \vee G) \neq Z(F) \cap Z(G)$; that is,
    there is some $i \in Z(F) \cap Z(G)$ for which
    $i \notin Z(F \vee G)$.  Denote $F' = H_i \cap (F \vee G)$.
    Since $i \notin Z(F \vee G)$ we see that $F'$ is a strict
    subface of $F \vee G$; moreover, since $i \in Z(F) \cap Z(G)$
    we have $F,G \subseteq H_i$ and so
    $F,G \subseteq F'$.  Therefore $F \vee G$ is not the
    smallest-dimensional face containing both $F$ and $G$,
    contradicting the definition of join.
\end{proof}

We now define the main data structure that we build during our
precomputation step in Theorem~\ref{t-precomp}:

\begin{definition}[Join map]
    Consider any polytope of the form
    $P = \{\mathbf{x}\in\R^n\,|\allowbreak\,A\mathbf{x}=\mathbf{b}\allowbreak
    \ \mathrm{and}\ \mathbf{x}\geq 0\}$.
    The \emph{join map} of $P$, denoted $\jm$,
    is a map of the form $\jm\co 2^{\{1,\ldots,n\}} \to \Z_{\geq 0}$;
    that is, $\jm$ maps subsets of $\{1,\ldots,n\}$ to non-negative
    integers.
    For each subset $S \subseteq \{1,\ldots,n\}$,
    we define the image $\jm(S)$ to be the number of pairs
    of distinct vertices $\{u,v\} \in P$ for which $Z(u \vee v) = S$.
\end{definition}

The following result reformulates Theorem~\ref{t-adj} in terms of the join map,
and follows immediately from Theorem~\ref{t-adj} and Lemma~\ref{l-zero}.

\begin{corollary} \label{c-adj-map}
    Let $P$ be a simple polytope, and let $u,v$ be vertices of $P$.
    Then $u$ and $v$ are adjacent if and only if
    $\jm(Z(u) \cap Z(v)) = 1$.

    If $P$ is any polytope (not necessarily simple), then
    the forward direction still holds:
    if $u$ and $v$ are adjacent then we must have
    $\jm(Z(u) \cap Z(v)) = 1$.
\end{corollary}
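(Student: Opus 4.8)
The plan is to derive Corollary~\ref{c-adj-map} directly from Theorem~\ref{t-adj} and Lemma~\ref{l-zero}, since both the statement and the surrounding text signal that no new ideas are needed---only a translation into the language of zero sets and the join map. First I would recall from Lemma~\ref{l-zero} that $Z(u \vee v) = Z(u) \cap Z(v)$ for any two vertices $u,v$, so that the quantity $\jm(Z(u) \cap Z(v))$ is precisely the number of pairs of distinct vertices $\{x,y\}$ of $P$ with $x \vee y = u \vee v$ (using that $F = G$ iff $Z(F) = Z(G)$, the second bullet of Lemma~\ref{l-zero}, to pass between faces and their zero sets).

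Next I would handle the forward direction for an arbitrary polytope $P$: if $u$ and $v$ are adjacent, then $u \vee v$ is an edge, and by the forward direction of Theorem~\ref{t-adj} there is \emph{exactly one} pair of distinct vertices whose join equals that edge---necessarily $\{u,v\}$ itself. Translating via the previous paragraph, this says $\jm(Z(u) \cap Z(v)) = 1$. For the reverse direction, assume $P$ is simple and $\jm(Z(u) \cap Z(v)) = 1$. Let $F = u \vee v$; then $F$ is a face of $P$ that arises as the join of exactly one pair of distinct vertices, so by the reverse direction of Theorem~\ref{t-adj} (which requires simplicity) $F$ is an edge. Since $u, v \in F$ and $F$ is an edge with exactly two vertices, $u$ and $v$ must be its two endpoints, hence adjacent.

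I expect there to be no serious obstacle here---the corollary is genuinely a corollary, and the work was already done in Theorem~\ref{t-adj} and Lemma~\ref{l-zero}. The one point that deserves a moment's care is making sure the counting of vertex pairs with a given join matches the definition of $\jm$ exactly: $\jm(S)$ counts unordered pairs $\{x,y\}$ of \emph{distinct} vertices with $Z(x \vee y) = S$, and Theorem~\ref{t-adj} likewise speaks of pairs of distinct vertices, so the bijection is clean. The only other thing to note is that in the forward direction we must use $u \neq v$ (so that $u \vee v$ really is an edge and not a single vertex), which holds by hypothesis since $u$ and $v$ are adjacent.
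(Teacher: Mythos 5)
Your proof is correct and matches the paper's intent exactly: the paper gives no explicit proof, stating only that the corollary ``follows immediately from Theorem~\ref{t-adj} and Lemma~\ref{l-zero},'' and your write-up is precisely the translation via zero sets (using $Z(u\vee v)=Z(u)\cap Z(v)$ and the equality criterion $F=G \Leftrightarrow Z(F)=Z(G)$) that this remark presupposes. No issues.
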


As a further corollary, the join map can be used to identify precisely
whether or not a polytope is simple:

\begin{corollary} \label{c-join-simple}
    Let $P$ be any polytope of dimension $d$.
    Then $P$ is simple if and only if, for every vertex $u \in P$,
    there are precisely $d$ other vertices $u' \in P$ for which
    $\jm(Z(u) \cap Z(u')) = 1$.
\end{corollary}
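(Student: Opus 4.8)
The plan is to prove Corollary~\ref{c-join-simple} by relating the combinatorial simplicity condition to the edge-counting consequence of Corollary~\ref{c-adj-map}. The forward direction is almost immediate: if $P$ is simple of dimension $d$, then every vertex $u$ lies in exactly $d$ facets, and the vertex figure of $u$ is a $(d-1)$-simplex, so $u$ has exactly $d$ neighbours. By Corollary~\ref{c-adj-map}, a vertex $u'$ is adjacent to $u$ if and only if $\jm(Z(u) \cap Z(u')) = 1$, so there are precisely $d$ such vertices $u'$. This establishes one direction.

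For the reverse direction, I would argue by contraposition: suppose $P$ is \emph{not} simple, and exhibit a vertex $u$ for which the count of vertices $u'$ with $\jm(Z(u)\cap Z(u')) = 1$ is not equal to $d$. Since $P$ is not simple, some vertex $u$ belongs to more than $d$ facets, and hence has more than $d$ neighbours in the graph of $P$ (the vertex figure at $u$ is a polytope of dimension $d-1$ that is not a simplex, so it has at least $d+1$ vertices). Now here is the delicate point: for each neighbour $u'$ of $u$, we know from Corollary~\ref{c-adj-map} (forward direction, which holds for arbitrary polytopes) that $\jm(Z(u)\cap Z(u')) = 1$. Conversely, if $u'$ is \emph{not} a neighbour of $u$, could we still have $\jm(Z(u)\cap Z(u')) = 1$? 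By definition $\jm(Z(u)\cap Z(u')) = \jm(Z(u\vee u'))$ counts the pairs $\{x,y\}$ with $Z(x\vee y) = Z(u\vee u')$, i.e., with $x\vee y = u\vee u'$; since the pair $\{u,u'\}$ itself is one such pair, the count is at least $1$, and it equals $1$ exactly when $\{u,u'\}$ is the \emph{only} such pair. If $u$ and $u'$ are non-adjacent then $u\vee u'$ is a face $F$ of dimension at least $2$; I need to rule out the possibility that $\{u,u'\}$ is the unique pair of vertices of $F$ whose join is all of $F$. But this is exactly what Theorem~\ref{t-adj} (reverse direction, for non-simple $P$) does \emph{not} guarantee — so I cannot use it directly on $F$, because $F$ need not be simple.

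I expect this to be the main obstacle, and I would resolve it by counting more carefully rather than case-by-case. The cleanest route: let $k_u$ denote the number of vertices $u'$ with $\jm(Z(u)\cap Z(u')) = 1$. Every neighbour of $u$ contributes to $k_u$, so $k_u \geq (\text{number of neighbours of }u) \geq d$, with equality in the second inequality iff $u$ lies in exactly $d$ facets. If $P$ is not simple, pick $u$ lying in more than $d$ facets; then $k_u \geq (\text{number of neighbours of }u) > d$, so the condition ``$k_u = d$ for every vertex $u$'' fails. This sidesteps the subtlety entirely: I never need to decide whether non-adjacent pairs can also satisfy $\jm = 1$, because the neighbours alone already overshoot $d$. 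The only facts I need are (i) the forward direction of Corollary~\ref{c-adj-map}, valid for all polytopes, giving $k_u \geq \deg(u)$, and (ii) the standard polytope fact that in a simple polytope $\deg(u) = d$ while in a non-simple polytope some vertex has $\deg(u) > d$ (equivalently, lies in more than $d$ facets, since the vertex figure is then a non-simplicial $(d-1)$-polytope with more than $d$ vertices, each corresponding to an edge through $u$). Assembling these two observations with the forward direction already handled gives the equivalence.
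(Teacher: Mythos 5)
Your proof is correct and takes essentially the same route as the paper: the forward direction uses the biconditional in Corollary~\ref{c-adj-map} for simple polytopes, and the reverse direction argues by contraposition that a non-simple polytope has a vertex with more than $d$ neighbours, each of which forces $\jm(Z(u)\cap Z(u'))=1$ by the forward (always-valid) direction of Corollary~\ref{c-adj-map}. Your extended worry about non-adjacent pairs also contributing is resolved exactly as the paper implicitly does --- the neighbours alone already push the count above $d$, so no further analysis is needed.
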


\begin{proof}
    If $P$ is simple then, for each vertex $u \in P$, there are
    precisely $d$ other vertices $u' \in P$ adjacent to $u$.
    By Corollary~\ref{c-adj-map} it follows that
    there are precisely $d$ other vertices $u' \in P$ for which
    $\jm(Z(u) \cap Z(u')) = 1$.

    If $P$ is non-simple then there is some vertex $u \in P$ that
    belongs to $>d$ edges, and so there are $>d$ other vertices
    $u' \in P$ adjacent to $u$.
    By Corollary~\ref{c-adj-map},
    we have $\jm(Z(u) \cap Z(u')) = 1$ for each of these adjacent vertices.
\end{proof}

We now show that the join map enjoys many of the properties required for
the complexity bounds in Theorem~\ref{t-precomp}:

\begin{lemma} \label{l-joinmap}
    Consider any polytope
    $P = \{\mathbf{x}\in\R^n\,|\allowbreak\,A\mathbf{x}=\mathbf{b}\allowbreak
    \ \mathrm{and}\ \mathbf{x}\geq 0\}$,
    and suppose we have a list of all vertices of $P$, with
    $V$ vertices in total.
    Then we can construct the join map $\jm$ in $O(nV^2)$ time and
    $O(nV^2)$ space.
    Once it has been constructed, we can compute
    $\jm(S)$ for any set $S \subseteq \{1,\ldots,n\}$
    in $O(n)$ time.
\end{lemma}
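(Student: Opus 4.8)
The plan is to build, during precomputation, a table that records every zero set arising as $Z(u\vee v)$ for a pair of distinct vertices, together with the number of pairs that produce it, organised so that each later query costs only $O(n)$. The first step is to compute the zero set of each vertex: a vertex $u$ is supplied as a point $\mathbf{u}\in\R^n$, and since $u$ is a $0$-dimensional face we simply have $Z(u)=\{i:u_i=0\}$, which we read off and store as a length-$n$ bitmask in $O(n)$ time. Over all $V$ vertices this costs $O(nV)$ time and space.

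Next I would iterate over the $\binom{V}{2}$ unordered pairs $\{u,v\}$ of distinct vertices. By Lemma~\ref{l-zero}, $Z(u\vee v)=Z(u)\cap Z(v)$, which is computed from the two stored bitmasks in $O(n)$ time. I would insert each resulting set $S=Z(u)\cap Z(v)$ into a binary trie keyed on the $n$ bits of its characteristic vector: for each of the $n$ bit positions in turn, descend from the current node to the child indicated by that bit, creating the child first if it does not yet exist, and then increment a counter stored at the depth-$n$ node reached after all $n$ steps. A single insertion takes $O(n)$ time and creates at most $n$ new nodes, so processing all $O(V^2)$ pairs builds the trie in $O(nV^2)$ time and $O(nV^2)$ space; since $O(nV)\le O(nV^2)$, the whole construction meets the claimed bounds. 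By construction, the counter at the depth-$n$ node addressed by the characteristic vector of a set $S$ is exactly the number of pairs $\{u,v\}$ with $Z(u)\cap Z(v)=S$, that is, $\jm(S)$; if that node is absent then $\jm(S)=0$. To evaluate $\jm(S)$ for a query set $S\subseteq\{1,\ldots,n\}$ we therefore just walk down the trie following the $n$ bits of its characteristic vector and return the counter found there, or $0$ if at some point the required child is missing; this is $O(n)$ time.

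The one point that needs care is reconciling the $O(n)$ query bound with the $O(nV^2)$ construction bound. One could instead sort the $O(V^2)$ zero sets into a single array, which a radix sort on the $n$ bit positions accomplishes in $O(nV^2)$ time, but then each query would need a binary search costing $O(n\log V)$; a comparison-based sort is worse still, at $O(nV^2\log V)$. The trie avoids both difficulties at once, and unlike a hash table it delivers a worst-case rather than an expected-time query bound, which is why I would use it here.
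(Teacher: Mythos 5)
Your proposal is correct and follows essentially the same route as the paper: iterate over all $\binom{V}{2}$ pairs, compute $Z(u)\cap Z(v)$ in $O(n)$ time via Lemma~\ref{l-zero}, and store the counts in a binary trie of depth $n$ with $O(n)$ insertion and lookup, yielding the stated $O(nV^2)$ time and space bounds and $O(n)$ queries. The extra discussion of sorting and hashing alternatives is not needed but does no harm.
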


\begin{proof}
    We store the join map $\jm$ using a \emph{trie} (also known as a
    \emph{prefix tree}).  This is a binary tree of height $n$.
    Each leaf node (at depth $n$) represents some set
    $S \subseteq \{1,\ldots,n\}$, and stores the corresponding image $\jm(S)$.
    Each intermediate node at depth $k<n$ supports two children:
    a ``left child'' beneath which every set $S$ has $k+1 \notin S$,
    and a ``right child'' beneath which every set $S$ has $k+1 \in S$.

    We optimise our trie by only storing leaf nodes for sets $S$ with
    $\jm(S) \geq 1$, and only storing intermediate nodes that have
    such leaves beneath them.
    Figure~\ref{fig-trie} illustrates the complete trie
    for an example map with $n=3$.

    \begin{figure}[htb]
        \centering
        {\small Join map:\quad
        $\begin{array}{l|c|c|c|c|c|c|c|c}
            S & \,\{\,\}\, & \,\{1\}\, & \,\{2\}\, & \,\{3\}\, &
                \,\{1,2\}\, & \,\{1,3\}\, &
                \,\{2,3\}\, & \,\{1,2,3\}\, \\
            \hline
            \jm(S) & 0 & 0 & 7 & 5 & 4 & 0 & 1 & 1
        \end{array}$}
        \bigskip \\
        \includegraphics[scale=1.0]{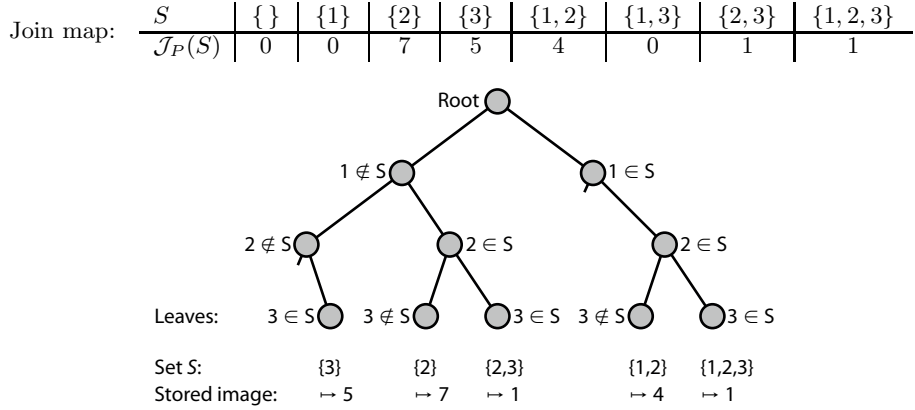}
        \caption{An example of a trie representing a join map for $n=3$}
        \label{fig-trie}
    \end{figure}

    Tries are fast to use: for any set $S \subseteq \{1,\ldots,n\}$,
    the operations of looking up the value of $\jm(S)$,
    inserting a new value of $\jm(S)$,
    or updating an existing value of $\jm(S)$ are all $O(n)$,
    since each operation requires us to follow a single path
    from the root down to level~$n$ (possibly inserting new nodes as we go).
    For further information on tries in general, see a standard text
    such as \cite{knuth98-art3}.

    It is clear now that we can construct $\jm$ in $O(nV^2)$ time: for each
    of the $\binom{V}{2}$ pairs of vertices $\{u,v\}$, we construct
    the set $S=Z(u \vee v)=Z(u) \cap Z(v)$ in $O(n)$ time
    (just test which coordinate positions are zero in both $u$ and $v$),
    and then perform an $O(n)$ lookup for $S$ in the trie.
    If $S$ is present then we increment $\jm(S)$; otherwise
    we perform an $O(n)$ insertion to store the new value $\jm(S)=1$.

    It is also clear that the trie consumes at most
    $O(nV^2)$ space: because the construction involves $O(V^2)$ insertions
    we have $O(V^2)$ leaf nodes, and since the
    trie has depth $n$ this gives $O(nV^2)$ nodes in total.

    Finally, for any set $S \subseteq \{1,\ldots,n\}$,
    computing $\jm(S)$ involves a simple lookup
    operation in the trie, which again requires $O(n)$ time.
\end{proof}

We are now ready to complete the proof of our main result,
Theorem~\ref{t-precomp}:

\begin{proof}[Proof of Theorem~\ref{t-precomp}]
    The precomputation involves three stages:
    \begin{enumerate}[(i)]
        \item building the join map $\jm$;
        \item computing $\dim P$ using a rank computation;
        \item iterating through the join map to determine whether $P$ is
        simple.
    \end{enumerate}

    By Lemma~\ref{l-joinmap}, stage~(i) requires $O(nV^2)$ time and
    $O(nV^2)$ space.

    To compute the dimension in stage~(ii) we select an arbitrary vertex
    $v_0\in P$, and build a $(V-1) \times n$ matrix $M$ whose rows are
    of the form $v-v_0$ for each vertex $v \neq v_0$.  It follows that
    $\dim P = \rank M$, and using standard Gaussian elimination we can
    compute this rank in $O(n^2 V)$ time and $O(nV)$ space.

    For stage~(iii) we once again scan through all $\binom{V}{2}$
    pairs of vertices $u,v$, compute $Z(u) \cap Z(v)$
    for each in $O(n)$ time, and use an $O(n)$ lookup in $\jm$ to
    test whether $\jm(Z(u) \cap Z(v))=1$.
    We can thereby identify whether or not,
    for each vertex $u \in P$, there are precisely $\dim P$ other
    vertices $u' \in P$ for which $\jm(Z(u) \cap Z(u')) = 1$;
    by Corollary~\ref{c-join-simple} this tells us whether or not
    $P$ is simple.
    The total running time for this stage is $O(nV^2)$.

    Summing the three stages together, we find that our precomputation
    step requires a total of $O(n^2V+nV^2)$ time and $O(nV^2)$ space.

    Once this precomputation is complete, it is clear from stages~(ii)
    and (iii) that we know immediately the dimension of $P$ and
    whether or not $P$ is simple.

    Consider now any two vertices $u,v \in P$.  As before, we can compute
    the set $Z(u) \cap Z(v)$ in $O(n)$ time,
    and using Lemma~\ref{l-joinmap} we can evaluate
    $\jm(Z(u) \cap Z(v))$ in $O(n)$ time.
    Now Corollary~\ref{c-adj-map} tells us what we need to know:
    if $P$ is simple then $u$ and $v$ are adjacent if and only if
    $\jm(Z(u) \cap Z(v)) = 1$,
    and if $P$ is non-simple but $\jm(Z(u) \cap Z(v)) \neq 1$
    then we still identify that $u$ and $v$ are non-adjacent.
\end{proof}

%
%

\section{Discussion} \label{s-disc}

As noted in the introduction, the proof of Theorem~\ref{t-comp} is
reminiscent of the Lemke-Howson algorithm for constructing Nash
equilibria \cite{lemke64-games}.
The Lemke-Howson algorithm operates on a pair of simple polytopes
$P$ and $Q$ (best response polytopes for a bimatrix game),
each with precisely $f = \dim P + \dim Q$ facets labelled
$1,\ldots,f$, and locates
vertices $u\in P$ and $v \in Q$ whose incident facet labels combine
to give the full set $\{1,\ldots,f\}$.
See \cite{avis10-nash,lemke64-games} for details.

The Lemke-Howson algorithm can also be framed
in terms of paths through a graph $\Gamma$, where it can be shown that
these paths yield a 1-factor\-is\-ation of $\Gamma$.
One then obtains the corollary that the number of fully-labelled
vertex pairs is even; in particular, because there is always a ``trivial''
pair $(\mathbf{0},\mathbf{0})$, there must be a second pair
(which gives rise to a Nash equilibrium).

In this paper our setting is less well controlled.
We work with a single polytope $P$,
which means we must avoid transforming the pair of vertices $\{u,v\}$
into a pair of the form $\{u,w\}$, or indeed into the identical pair
$\{v,u\}$.
Moreover, $P$ may have arbitrarily many facets,
which makes the arcs of $\auxgraph(P)$ more difficult to categorise.

In particular we do not obtain any such 1-factorisation or parity results,
as noted in Observation~\ref{o-odd}.  Nevertheless, Theorem~\ref{t-2d} shows
that we can obtain parity results under the stronger restriction of
precisely $2d$ facets, which is in fact the setting for the 
the (now disproved) $d$-step conjecture \cite{kim10-hirsch,klee67-dstep}.

Moving to Theorem~\ref{t-precomp}:
our $O(n)$ time adjacency test is the fastest we can hope for, since vertices
require $\Omega(n)$ space to store (a consequence of the fact
that there may be exponentially many vertices \cite{mcmullen70-ubt}).
In contrast, standard approaches to adjacency testing use either
an $O(nV)$ ``combinatorial test'' (where we search for a third vertex
$w \in u \vee v$), or an $O(n^3)$ ``algebraic test'' (where we use a
rank computation to determine $\dim(u \vee v)$).
See \cite{fukuda96-doubledesc} for details of these standard tests.

If we are testing adjacency for all pairs of vertices, our total running
time including precomputation comes to $O(n^2V+nV^2)$, as opposed to
$O(nV^3)$ or $O(n^3V^2)$ for the combinatorial and algebraic tests
respectively.  This is a significant improvement, given that $V$ may be
exponential in $n$.

The main drawback of our test is that it only guarantees
conclusive results for simple polytopes.
Nevertheless, it remains useful in the general case:
it can detect when a polytope is simple, even if this is not clear
from the initial representation
$\{\mathbf{x}\in\mathbb{R}^n\,|\,A\mathbf{x}=\mathbf{b}\ \mbox{and}\
\mathbf{x}\geq 0\}$,
and even in the non-simple setting it gives
a fast filter for eliminating non-adjacent pairs of vertices.

One application of all-pairs adjacency testing is in studying the
\emph{graph} of a polytope; that is, the graph formed from its
vertices and edges.
Another key application is in the
\emph{vertex enumeration} problem: given a polytope in the form
$\{\mathbf{x}\in\mathbb{R}^n\,|\,A\mathbf{x}=\mathbf{b}\ \mbox{and}\
\mathbf{x}\geq 0\}$, identify all of its vertices.  This is a difficult
problem, and it is still unknown whether there exists an algorithm
polynomial in the combined input and output size.

The two best-known algorithms for vertex enumeration are reverse search
\mbox{\cite{avis00-revised,avis92-pivot}}
and the double description method
\cite{fukuda96-doubledesc,motzkin53-dd}, each with their own advantages
and drawbacks.  The double description method, which
features in several application areas such as
multiobjective optimisation \cite{benson98-outer} and
low-dimensional topology \cite{burton10-dd}, inductively constructs a
sequence of polytopes by adding constraints one at a time.
Its major bottleneck is in identifying pairs of adjacent vertices in each
intermediate polytope,
and in this setting our fast all-pairs adjacency test can be of significant
practical use.

%
%

\bibliographystyle{amsplain}
\bibliography{pure}

\end{document}